\newcommand{\rt}{\rightarrow}
\newcommand{\lrt}{\longrightarrow}
\newcommand{\va}{\varphi}
\newcommand{\st}{\stackrel}
\newcommand{\La}{\Lambda}
\newcommand{\Ga}{\Gamma}
\newcommand{\Om}{\Omega}
\newcommand{\Z}{\mathbb{Z}}
\newcommand{\SA}{\mathscr{A}}
\newcommand{\SB}{\mathscr{B}}
\newcommand{\SC}{\mathscr{C}}
\newcommand{\SE}{\mathscr{E}}
\newcommand{\SF}{\mathscr{F}}
\newcommand{\SI}{\mathscr{I}}
\newcommand{\SJ}{\mathscr{J}}
\newcommand{\SM}{\mathscr{M}}
\newcommand{\SX}{\mathscr{X}}
\newcommand{\CI}{\mathcal{I} }
\newcommand{\X}{\mathbf{X}}
\newcommand{\Y}{\mathbf{Y}}
\newcommand{\Ob}{{\rm{Ob}}}
\newcommand{\mmod}{{\rm{{mod\mbox{-}}}}}
\newcommand{\Gprj}{{\Gp\mbox{-}}}
\newcommand{\PB}{{\rm{PB}}}
\newcommand{\op}{{\rm{op}}}
\newcommand{\cone}{{\rm{cone}}}
\newcommand{\Gp}{{\rm{Gprj}}}
\newcommand{\Hom}{{\rm{Hom}}}
\newcommand{\Ext}{{\rm{Ext}}}
\newcommand{\Yext}{{\rm{Yext}}}
\theoremstyle{plain}
\newtheorem{theorem}{Theorem}[section]
\newtheorem{corollary}[theorem]{Corollary}
\newtheorem{lemma}[theorem]{Lemma}
\newtheorem{proposition}[theorem]{Proposition}
\newtheorem{notation}[theorem]{Notation}
\theoremstyle{definition}
\newtheorem{definition}[theorem]{Definition}
\newtheorem{example}[theorem]{Example}
\newtheorem{construction}[theorem]{Construction}
\newtheorem{remark}[theorem]{Remark}
\newtheorem{setup}[theorem]{Setup}
\theoremstyle{plain}
\theoremstyle{definition}
\numberwithin{equation}{section}
\begin{document}

\title[Higher Ideal Approximation Theory]{Higher Ideal Approximation Theory}

\author[Javad Asadollahi and Somayeh Sadeghi]{Javad Asadollahi and Somayeh Sadeghi}

\address{Department of Pure Mathematics, Faculty of Mathematics and Statistics, University of Isfahan, P.O.Box: 81746-73441, Isfahan, Iran}
\email{asadollahi@ipm.ir, asadollahi@sci.ui.ac.ir}
\email{so.sadeghi@sci.ui.ac.ir }

\subjclass[2010]{18E05, 18G25, 18G15, 18E99,  16E30}

\keywords{Ideal approximation theory, $n$-exact categories, $n$-cluster tilting subcategories, higher phantom morphisms, complete cotorsion pairs. }

\begin{abstract}
Let $\SC$ be an $n$-cluster tilting subcategory of an exact category $(\SA, \SE)$, where $n \geq 1$ is an integer. It is proved by Jasso that if $n> 1$, then $\SC$ although is no longer exact, but has a nice structure known as $n$-exact structure. In this new structure conflations are called admissible $n$-exact sequences and are $\SE$-acyclic complexes with $n+2$ terms in $\SC$. Since their introduction by Iyama, cluster tilting subcategories has gained a lot of traction, due largely to their links and applications to many research areas, many of them unexpected. On the other hand, ideal approximation theory, that is a gentle generalization of the classical approximation theory and deals with morphisms and ideals instead of objects and subcategories, is an active area that has been the subject of several researches. Our aim in this paper is to introduce the so-called `ideal approximation theory' into `higher homological algebra'. To this end, we introduce some important notions in approximation theory into the theory of $n$-exact categories and prove some results. In particular, the higher version of the notions such as ideal cotorsion pairs, phantom ideals, Salce's Lemma and Wakamatsu's Lemma for ideals will be introduced and studied. Our results motivate the definitions and show that $n$-exact categories are the appropriate context for the study of `higher ideal approximation theory'.
\end{abstract}

\maketitle

\tableofcontents

\section{Introduction}
One of the cornerstones of the modern representation theory is the Auslander's theorem \cite{Au} proving that there is a bijective correspondence between Morita equivalence classes of Artin algebras $\La$ of finite representation type and Morita equivalence classes of algebras $\Ga$ satisfying homological conditions that global dimension of $\Ga$ is less than or equal to two and its dominant dimension \cite{Ta} is greater or equal than two.

In a successful attempt to build up a higher version of Auslander's correspondence and also generalizing the classical theory of almost split sequences of Auslander-Reiten, Iyama \cite{I1, I2, I3} introduced the notion of $n$-cluster tilting subcategories, where $n$ is an integer greater or equal than $1$. Soon it is realized that these subcategories play a crucial role in the theory and so cluster tilting subcategories became the subject of several researches.

In particular, study of the structure of such subcategories leads Jasso \cite{Ja} to a higher version of the classical homological algebra and as a consequence new notions such as $n$-abelian and $n$-exact categories were born. These notions provide appropriate higher version of the classical abelian and exact categories, in the sense that $1$-abelian and $1$-exact categories are the usual abelian and exact categories. Instead of the usual kernels and cokernels, resp. inflations and deflations, in these categories we have the notions of $n$-kernels and $n$-cokernels and the role of short exact sequences are played by exact complexes with $n+2$ terms.

Since their introduction, various authors studied further several properties of $n$-abelian and $n$-exact categories, also trying to introduce notions of classical homological algebra into this higher version, which is now known as higher homological algebra. For some attempts in this direction see e.g. \cite{Jor}, \cite{JK}, \cite{AHS} and \cite{Kv}.

Considering the fact that approximation theory is one of the efficient tools for studying complicated objects in a category, the aim of the present paper is to contribute to this project by introducing some notions of ideal approximation theory into higher homological algebra.

The starting point of approximation theory is the discovery of the existence of injective envelopes by Baer in 1940. Approximation theory, that is approximation of complicated objects of a category by simpler objects in a specific subcategory, is essentially based on the notions of preenvelopes and precovers. Recall that a class $\SF$ of $R$-modules is precovering if for every $R$-module $M$, there exists a morphism $\va: F \lrt M$ with $F \in \SF$ such that the induced morphism $\Hom_R(F', F) \lrt \Hom_R(F', M)$ is surjective, for all $F' \in \SF$. Dually the notion of (pre-) enveloping classes is defined. An important problem in this context is to investigate whether a class of modules is (pre-) enveloping or/and (pre)covering. For instance, flat cover conjecture, posed by Enochs \cite{E} states that the class of of flat modules is a precovering (and hence a covering) class. This conjecture proved affirmatively after about 30 years \cite{BEE}.

Thanks to the researches of Auslander and his colleagues for artin algebras, in particular, those of Auslander, Reiten and Smal{\o}, \cite{AS, AS2, AR}, approximation theory also plays a central role in representation theory of algebras under the name of left approximations (pre-envelopings) and right approximations (precoverings). For a good account on approximation theory see the monograph \cite{GT}.

In classical approximation theory, approximation is done by objects from a subcategory. But a nice generalization of the classical approximation theory, known as ideal approximation theory is studied systematically in \cite{FGHT} that  give morphisms and ideals of categories equal importance as objects and subcategories. In this theory, the role of the objects and subcategories in classical approximation theory is replaced by morphisms and ideals of the category.

An ideal of a category is an additive subfunctor of the Hom functor, which is closed under compositions by morphisms from left and right. For instance, the phantom ideal and phantom cover in module category are studied extensively by Herzog in \cite{H1}. Moreover, continuing the idea of \cite{FGHT}, Fu and Herzog in \cite{FH} studied ideal versions of some pillars of classical approximation theory, such as cotorsion pairs, Salce's Lemma and Wakamatsu's Lemma. Very recently, Breaz and Modoi \cite{BM} studied ideal cotorsion pairs in extension closed subcategories of triangulated categories. For more research in this direction see \cite{EAO} and \cite{O}.

Following these ideas, the general goal of this paper is to introduce ideal approximation theory into the higher homological algebra. Our results show that the correct context in which to carry these arguments out is that of an $n$-cluster tilting subcategory of an exact category. By \cite[\S 4]{Ja} we know that these subcategories are $n$-exact, i.e. with `admissible' sequences with $n+2$ terms as conflations. Using this structure, a `higher ideal approximation theory' is developed in this paper. We state and prove some foundational results in this subject to motivate the theory.

The paper is structured as follows. In section \ref{Higher Cotorsion}, we introduce and study a higher version of the notion of ideal cotorsion pairs \cite[Definition 12]{FGHT}. For a good account for cotorsion pairs in abelian categories see \cite[\S V.3]{BR}. In section \ref{Higher Phantom}, phantom morphisms in $n$-cluster tilting subcategories of exact categories will be studied. It is a higher version of the notion of phantom morphisms studied by Herzog \cite{H1, H2}. As an example, we study $n$-pure phantom morphisms. We recall that phantom morphisms were introduced by McGibbon \cite{Mc} for use in topology to study maps between CW-complexes. Neeman \cite{N} introduced this concept into the context of triangulated categories. The theory also was developed in the stable category of a finite group ring in a series of works of Benson and Gnacadja \cite{BG1, BG2, G}.

Higher version of Salce's Lemma for $n$-ideal cotorsion pairs is treated in section \ref{Higher Salce}. Salce's Lemma \cite{S} is one of the main theorems in the classical approximation theory. It relates the notions of (special) precoverings, (special) preenvelopings and cotorsion pairs. By introducing an interesting exact structure on the morphism category of an exact category, called ME-exact structure, an ideal version of Salce's Lemma is proved in \cite[Theorem 6.3]{FH}.

The main purpose of section \ref{Special Precovering} is to study connections between special precovering ideals and $n$-phantom morphisms in an $n$-cluster tilting subcategory. In particular, we show that every special precovering ideal, under some conditions, can be represented as an ideal of $n$-$\SF$-phantom morphisms, for some bifunctor $\SF$ of $\Ext^n$.

In the last section of the paper, we state and prove an ideal version of the Wakamatsu's Lemma \cite{Wak} in higher homological algebra. Note that an ideal version of Wakamatsu's Lemma in an exact category is proved in \cite[Theorem 10.3]{FH}. Moreover a version of Wakamatsu's Lemma for $(n+2)$-angulated categories is formulated and proved in \cite[\S 3]{Jor}.

\section{Higher homological algebra}
In this section we collect some basic facts and backgrounds on higher homological algebra we need throughout the paper. We are mainly work in an exact category $(\SA, \SE)$, where $\SA$ is an additive category and $\SE$ is a class of composable pairs (also called kernel-cokernel pairs) of morphisms in $\SA$ which is closed under isomorphisms and satisfies axioms of Definition 2.1 of \cite{B}. The composable pair $(i, p)$ in $\SE$ is denoted by $A' {\rightarrowtail} A {\twoheadrightarrow} A''$, while $i: A' \rt A$ is called an $\SE$-admissible monic and $p: A \rt A''$ is called an $\SE$-admissible epic. When the exact structure $\SE$ is clear from the context we just say admissible instead of $\SE$-admissible. Admissible pairs, admissible monics and admissible epics also called conflations, inflations and deflations, respectively, see \cite{Q} and \cite{K}.  For definitions and properties of exact categories we refer to \cite{B}.

\s{\sc $n$-exact categories.}
Let $n \geq 1$ be a fixed integer. The notion of $n$-exact categories is defined by Jasso in \cite[\S 4]{Ja} as a natural generalization of exact categories. Its idea appeared naturally in a series of papers by Iyama, studying a higher version of the Auslander's correspondence \cite{I1, I2, I3} and then axiomatized by Jasso \cite{Ja}. Let us recall the definitions and basic properties.

Let $\SC$ be an additive category. Let $f^0 : X^0 \lrt X^1$  be a morphism in $\SC$. An $n$-cokernel of $f^0$ is a sequence
\[X^1 \st{f^1}{\lrt} X^2 \lrt \cdots \lrt X^{n} \st{f^n}{\lrt} X^{n+1}\]
of morphisms in $\SC$ such that for every $X \in \SC$ the induced sequence
\[0 \lrt \SC(X^{n+1}, X) \st{f^n _*}{\lrt} \cdots \st{f^{ 1 }_*}{\lrt} \SC(X^1, X) \st{f^0_*}{\lrt} \SC(X^{0}, X)\]
of abelian groups is exact. Here and throughout we write $\SC( - , - )$ instead of $\Hom_{\SC}( - , -)$. We denote the $n$-cokernel of $f^0$ by $(f^1,  f^2, \cdots,  f^{n})$. The notion of $n$-kernel of a morphism $f^n:X^n \lrt X^{n+1}$ is defined similarly, or rather dually.

A sequence $X^0 \st{f^0 }{\lrt } X^1 \lrt \cdots \lrt X^n \st{f^n}{\lrt} X^{n+1}$ of objects and morphisms in $\SC$, is called $n$-exact \cite[Definitions 2.2, 2.4]{Ja} if $(f^0,  f^1, \cdots,  f^{n-1})$ is an $n$-kernel of $f^n$ and $(f^1, f^2, \cdots, f^{n})$ is an $n$-cokernel of $f^0$. An $n$-exact sequence like the above one, usually will be denoted by
\[ X^0 \st{f^0 }{ \rightarrowtail } X^1 \st{f^1}{\lrt}  X^2 \lrt \cdots \lrt X^n \st{ f^n }{\twoheadrightarrow } X^{n+1} .\]

Consider the complex
\[{\mathbf{X}}: \  X^0 \st{f^0}{\lrt} X^1 \lrt \cdots \lrt X^{n-1} \st{f^{n-1}}{\lrt} X^{n}\]
and morphism $g^0:X^0 \lrt Y^0$ in $\SC$. An $n$-pushout diagram of ${\bf X}$ along $g^0$ is a morphism
\begin{equation}
\begin{tikzcd}
X^0 \rar \dar{g^0} & X^1\rar \dar{g^1} & \cdots \rar & X^n \dar\\
Y^0  \rar & Y^1 \rar & \cdots \rar & Y^n
\end{tikzcd}
\end{equation}
of complexes such that in the mapping cone ${\bf C}=\cone(g)$
\[X^0 \st{d^{-1}_{\bf C}}{\lrt} X^1\oplus Y^0 \st{d^0_{\bf C}}{\lrt} \cdots \st{d^{n-2}_{\bf C}}{\lrt} X^n\oplus Y^{n-1} \st{d^{n-1}_{\bf C}}{\lrt} Y^n \]
the sequence $(d^0_{\bf C}, d^{1}_{\bf C}, \cdots, d^{n-1}_{\bf C})$ is an $n$-cokernel of $d^{-1}_{\bf C}.$ The maps in the mapping cone are defined as usual. For more details and properties of the $n$-pushout, and also $n$-pullback diagrams, see Subsection 2.3 of \cite{Ja}.

A morphism $f : {\X} \lrt {\Y}$ of $n$-exact sequences is called a weak isomorphism if $f^k$ and $f^{k+1}$ are isomorphisms for some $k \in \{ 0, 1, \cdots, n + 1\}$, where we set $n+2 := 0$.

An $n$-exact structure on $\SC$ is a class $\SX$ of $n$-exact sequences
\begin{equation*}
\begin{tikzcd}
 & \eta: \ X^0\ar[tail]{r}{f^0} & X^1 \ar{r}&\cdots \ar{r} & X^n \ar[two heads]{r}{f^n} & X^{n+1}
\end{tikzcd}
\end{equation*}
in $\SC$, called $\SX$-admissible $n$-exact sequences, that is closed under weak isomorphisms and satisfies the following axioms. $f^0$, resp. $f^n$, is then called an $\SX$-admissible monomorphism, resp. an $\SX$-admissible epimorphism. We just write admissible instead of $\SX$-admissible, when the class $\SX$ is clear from the context.
\begin{itemize}
\item [$(E0)$] The sequence
 $0\rightarrowtail 0 \rightarrow \cdots\rightarrow 0\twoheadrightarrow 0$ is an admissible $n$-exact sequence.
\item [$(E1)$] The class of admissible monomorphisms is closed under composition.
\item [$(E2)$] The $n$-pushout  of an admissible $n$-exact sequence $(d_X^0, d_X^1, ..., d_X^n)$  along  morphism $f : X^0 \rt Y^0$ exists and $d_Y^0$ is  an admissible monomorphism.
\begin{equation}\label{push-out}
\begin{tikzcd}
X^0 \rar[tail]{d^0_X} \dar{f}& X^1\rar{d_X^1} \dar& \cdots\rar & X^{n-1} \rar{d^{n-1}_X} \dar & X^n\rar[two heads]{d^n_X}\dar& X^{n+1}\\
Y^0  \rar{d^0_Y}& Y^1\rar& \cdots \rar & Y^{n-1} \rar{d^{n-1}_Y} & Y^n
\end{tikzcd}
\end{equation}
\item [$(E1^{\op})$] The class of admissible epimorphisms is closed under composition.
\item[$(E2^{\rm op})$] The $n$-pullback of an admissible $n$-exact sequence $(d_X^0, d_X^1, ..., d_X^n)$  along morphism $g : Y^{n+1} \rt X^{n+1}$ exists and $d_Y^n$ is an admissible epimorphism.
\begin{equation}\label{pull-back}
\begin{tikzcd}
& Y^1  \rar{d^1_Y}\dar & Y^2 \rar{d^2_Y} \dar & \cdots \rar &Y^n\rar{d^{n}_Y}\dar & Y^{n+1}  \dar{g}\\
X^0 \rar[tail]{d^0_X} & X^1\rar{d_X^1} & X^2 \rar{d^2_X} & \cdots\rar & X^n\rar[two heads]{d^n_X}& X^{n+1}\
\end{tikzcd}
\end{equation}
\end{itemize}
An $n$-exact category is a pair $(\SC, \SX)$ where $\SC$ is an additive category and $\SX$ is an $n$-exact structure on $\SC$.

Let $(\SC, \SX)$ be an $n$-exact category. By \cite[Proposition 4.8]{Ja}, the $n$-pushout diagram \ref{push-out} can be completed to the commutative diagram
\begin{equation}\label{completed pushout}
\begin{tikzcd}
X^0 \rar[tail]{d^0_X} \dar{f}& X^1\rar{d_X^1} \dar & \cdots\rar  & X^n\rar[two heads]{d^n_X} \dar & X^{n+1} \dar[equals]\\
Y^0  \rar[tail]{d^0_Y}& Y^1\rar & \cdots\rar{d^{n-1}_Y}& Y^n \rar[two heads, dotted]{d^n_Y}  & X^{n+1}
\end{tikzcd}
\end{equation}
where the lower row also is an admissible $n$-exact sequence. Similar result holds true for the $n$-pullback diagrams \ref{pull-back}. For more details and properties of $n$-exact categories see \cite[Section 4]{Ja}.

The most known examples of $n$-exact categories are $n$-cluster tilting subcategories of exact categories \cite[Theorem 4.14]{Ja}, introduced by Iyama in a series of papers, see e.g. \cite{I1, I2}. Let us recall their definitions and some basic properties we need later in the paper.

\s
Let $(\SA, \SE)$ be an exact category. A morphism $A \lrt A'$ in $\SA$ is called proper if it admits a factorization $A \twoheadrightarrow K \rightarrowtail A'$, such that $A \twoheadrightarrow K$ is an $\SE$-admissible epic and $K \rightarrowtail A'$ is an $\SE$-admissible monic. A sequence
\begin{equation*}
\begin{tikzcd}
\cdots \rar & A^{i-1} \ar{rr} \drar[two heads] && A^i \ar{rr} \drar[two heads] && A^{i+1} \rar & \cdots\\
&& K^{i-1} \urar[tail] && K^i \urar[tail]
\end{tikzcd}
\end{equation*}
of proper morphisms is called an $\SE$-acyclic complex if for all $i \in \Z$ the sequence $K^{i-1} \rightarrowtail A^i \twoheadrightarrow K^i$ is an $\SE$-admissible exact sequence. An $\SE$-acyclic complex such as
\[\delta: A' \rightarrowtail A^1 \rt A^2 \rt \cdots \rt A^i \twoheadrightarrow A \]
is called an $\SE$-acyclic complex of length $i$ with end-terms $A$ and $A'$.

\s \label{Yoneda-Ext} Let $(\SA, \SE)$ be an exact category. Let
\[\delta: A' \rightarrowtail B^1 \rt B^2 \rt \cdots \rt B^i \twoheadrightarrow A \]
be an $\SE$-acyclic complex of length $i$ with end-terms $A$ and $A'$. A morphism between $\delta$ and $\delta'$, denoted by $\delta \lrt \delta'$, is a commutative diagram
\begin{equation*}
\begin{tikzcd}
\delta: & A' \rar[tail] \dar[equal] & A^1  \rar \dar & A^2 \rar \dar & \cdots \rar & A^i \rar[two heads] \dar & A  \dar[equal]\\
\delta': & A' \rar[tail] & B^1\rar & B^2 \rar & \cdots \rar & B^i \rar[two heads] & A
\end{tikzcd}
\end{equation*}
We say that $\delta$ and $\delta'$ are Yoneda equivalent if there is a sequence of morphisms
\[\delta_0=\delta \lrt \delta_1 \longleftarrow \delta_2 \lrt \cdots \lrt \delta_{t-1} \longleftarrow \delta_t=\delta'\]
of $\SE$-acyclic complexes of length $i$ with end-terms $A$ and $A'$. It is known that the collection of Yoneda equivalence classes of all $\SE$-acyclic complexes of length $i$ with the same end-terms form a (big) abelian group under Baer sum. For a detailed explanation, see Subsection 6.2 of \cite{FS}. For objects $A$ and $A'$ in $\SA$, this group is denoted by $\Ext^i_{\SE}(A, A')$. Note that in general $\Ext^i_{\SE}(A, A')$ is not a set \cite[Remark 6.2 and Remark 6.20]{FS}. But if $\SA$ is a small category or contains enough $\SE$-projective or enough $\SE$-injective objects, then $\Ext^i_{\SE}(A, A')$ will be a set \cite[Remark 6.44]{FS}.

\begin{definition}(\cite[Definition 4.13]{Ja})
Let $(\SA, \SE)$ be a small exact category. A subcategory $\SC$ of $\SA$ is called an $n$-cluster tilting subcategory if it satisfies the following conditions.
\begin{itemize}
\item [$(i)$] For every object $A \in \SA$, there exists an admissible monomorphism $A \rightarrowtail C$, which is also a left $\SC$-approximation of $A$.
\item [$(ii)$] For every object $A \in \SA$, there exists an admissible epimorphism $C' \twoheadrightarrow A$, which is also a right $\SC$-approximation of $A$.
\item [$(iii)$] There exists equalities $\SC^{\perp_n}=\SC= {}^{\perp_n}\SC$, where
\[ \SC^{\perp_n} = \{A \in \SA : \Ext_{\SE}^i(C, A)=0 \ {\rm for \ all} \ C \in \SC {\rm \ and \ all} \ 1\leq i \leq n-1\},\]
\[{}^{\perp_n}\SC  = \{A \in \SA : \Ext_{\SE}^i(A, C)=0 \ {\rm for \ all} \ C \in \SC {\rm \ and \ all} \  1\leq i \leq n-1\}.\]
\end{itemize}
\end{definition}

Note that if $n=1$, $\SC=\SA$ is the unique $1$-cluster tilting subcategory of $\SA$. Following theorem provides a source of examples of $n$-exact categories.

\begin{theorem}(\cite[Theorem 4.14]{Ja})\label{Th-n-exact}
Let $\SC$ be an $n$-cluster tilting subcategory of the exact category $(\SA, \SE)$. Set
\[\SX=\{ {\bf C}: C^0 \rightarrowtail C^1 \rt \cdots \rt C^n \twoheadrightarrow C^{n+1} \ | \ {\bf C} \ {\rm is} \ \SE\mbox{-}{\rm acyclic \ and} \ C^i \in \SC, \forall i \in \{0, 1, \ldots, n+1\}\}.\]
Then $(\SC, \SX)$ is an $n$-exact category.
\end{theorem}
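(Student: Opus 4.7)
My plan is to verify, in order: (a) every $\mathbf{C} \in \SX$ is genuinely $n$-exact when viewed inside $\SC$; (b) $\SX$ is closed under weak isomorphisms; (c) the composition axioms (E0), (E1), (E1$^{\op}$); and (d) the main axioms (E2) and (E2$^{\op}$) asserting the existence of $n$-pushouts and $n$-pullbacks.

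For (a), given an $\SE$-acyclic complex $\mathbf{C}\colon C^0 \rightarrowtail C^1 \to \cdots \to C^n \twoheadrightarrow C^{n+1}$ with all terms in $\SC$, I would decompose it into the admissible short exact sequences $K^{i-1} \rightarrowtail C^i \twoheadrightarrow K^i$ coming from $\SE$-acyclicity. For any $X \in \SC$, applying $\SA(X,-)$ produces long exact sequences whose connecting maps involve $\Ext^j_\SE(X, K^i)$. Using the axiom $\SC = \SC^{\perp_n}$ from the definition of $n$-cluster tilting, together with dimension-shifting along these short exact sequences, one sees $\Ext^j_\SE(X, K^i) = 0$ for $1 \le j \le n-1$; splicing the pieces yields precisely the $n$-cokernel condition. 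The dual computation with $\SA(-,X)$ and $\SC = {}^{\perp_n}\SC$ supplies the $n$-kernel condition. Part (b) is immediate: weak isomorphisms preserve both $\SE$-acyclicity and the property that all terms lie in $\SC$.

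For (c), axiom (E0) is trivial. For (E1), given two composable admissible monics I would fit each into an admissible $n$-exact sequence and use the $n$-pushout from (E2) to splice them, so that (E1) reduces to (E2); (E1$^{\op}$) is dual.

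The main obstacle is (E2), where I must build the $n$-pushout while keeping every object inside $\SC$. My strategy is an inductive, column-by-column construction. The first short exact sequence $K^0 \rightarrowtail C^1 \twoheadrightarrow K^1$ extracted from $\mathbf{X}$ admits a pushout in the ambient exact category $(\SA, \SE)$ along $f\colon X^0 \to Y^0$, producing an admissible short exact sequence $Y^0 \rightarrowtail T^1 \twoheadrightarrow K^1$ in $\SA$; since $T^1$ need not lie in $\SC$, I would compose with a left $\SC$-approximation $T^1 \rightarrowtail Y^1$, which is an admissible monic by the cluster tilting axiom, and take the induced cokernel to get a new admissible short exact sequence entirely inside $\SC$. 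Iterating this pushout-then-approximate procedure along the complex yields the desired diagram. The delicate point — and what I expect to be the hardest part — is verifying that the resulting diagram really is an $n$-pushout in Jasso's sense, i.e.\ that $(d^0_{\mathbf{C}}, \ldots, d^{n-1}_{\mathbf{C}})$ in the mapping cone is an $n$-cokernel of $d^{-1}_{\mathbf{C}}$ and that the bottom row is $\SE$-acyclic with common end term $X^{n+1}$, as in the completed pushout diagram above. Both reductions come down to vanishing of $\Ext^j_\SE$ on $\SC$ for $1 \le j \le n-1$, and hence to the cluster tilting equality $\SC = \SC^{\perp_n}$. Axiom (E2$^{\op}$) is obtained by the dual argument using right $\SC$-approximations and $\SC = {}^{\perp_n}\SC$.
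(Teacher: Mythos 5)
Your sketch has the right overall shape and matches how Jasso organizes the argument in \cite[\S 4]{Ja}; in particular, the ``pushout-then-approximate'' construction you describe for (E2) is precisely the one recalled (for $n$-pullbacks, dually) in Remark~\ref{Rem-Construction}, and part (a) is correctly reduced to dimension shifting against the cluster-tilting orthogonality. Note, though, that the paper itself only \emph{cites} this result from Jasso and does not prove it, so the comparison below is against Jasso's proof rather than a proof contained in the paper.

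The reduction of (E1) to (E2) does not work as stated, and this is a genuine gap. Given admissible monics $f\colon X^0\rightarrowtail X^1$ and $g\colon X^1\rightarrowtail Y^1$, the admissible $n$-exact sequence containing $f$ starts at $X^0$ while the one containing $g$ starts at $X^1$; an $n$-pushout of either is available only along a morphism \emph{out of its first object}, so neither accepts the other as input, and there is no end-to-end splicing as in the $n=1$ long-exact-sequence picture because the terminal object of the first sequence is $X^{n+1}$, not $X^1$. What Jasso actually uses, both for (E1) and for verifying that your approximated pushout really is an admissible $n$-exact sequence, is a separate extension lemma: every $\SE$-admissible monic $A\rightarrowtail B$ with $A,B\in\SC$ extends to an $\SE$-acyclic complex of length $n$ with all terms in $\SC$, so that (E1) is inherited from composition of $\SE$-admissible monics in the ambient $\SA$. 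Establishing that lemma (and its dual) is where the real work lies: one iteratively takes left $\SC$-approximations of successive cokernels, and the delicate point --- which you flag but do not carry out --- is showing that after exactly $n$ steps the remaining cokernel lands back in $\SC$. This requires playing $\SC=\SC^{\perp_n}$ and $\SC={}^{\perp_n}\SC$ off against each other along both long exact $\Ext_\SE$-sequences; a one-sided dimension shift only controls $\Ext^i_{\SE}(C,-)$ in the range $1\le i\le n-1$ and does not by itself force the $n$-th cokernel into $\SC$. Until that lemma is proved, both (E1) and the mapping-cone verification in (E2) remain open, and together they constitute the bulk of the content of \cite[Theorem 4.14]{Ja}.
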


\begin{remark}\label{Rem-Construction}
By the above theorem any $n$-cluster tilting subcategory of an exact category is $n$-exact. In particular, $n$-pullbacks and $n$-pushouts exist in $\SC$. Let us explain the construction of $n$-pullback diagrams. $n$-pushout diagrams are constructed similarly, see e.g. \cite[Proposition 2.18]{JK}. Consider the following $n$-pullback diagram
\begin{equation*}
\begin{tikzcd}
& X^0 \rar[tail]{d^0_Y} \dar[equal] & Y^1  \rar{d^1_Y}\dar & Y^2 \rar{d^2_Y} \dar & \cdots \rar &Y^n\rar{d^{n}_Y}\dar & Y^{n+1}  \dar{g}\\
\eta: & X^0 \rar[tail]{d^0_X} & X^1\rar{d_X^1} & X^2 \rar{d^2_X} & \cdots\rar & X^n\rar[two heads]{d^n_X}& X^{n+1}
\end{tikzcd}
\end{equation*}
obtained from $n$-exact sequence $\eta$ along $g$. Each square
\begin{equation*}
\begin{tikzcd}
Y^{i} \ar{dd} \ar{rr} && Y^{i+1}  \ar{dd}{g^{i+1}} \\ \\
X^{i} \ar{rr}{d^i_X} && X^{i+1}
\end{tikzcd}
\end{equation*}
\noindent for $i \in \{1, 2, \ldots, n\}$, is constructed by taking usual pullback diagram of the morphism $({g^{i+1}}, {d^i_X})$ in the ambient exact category $(\SA, \SE)$ and then taking a right $\SC$-approximation of the outcome, that exists by definition of $n$-cluster tilting subcategories. In other words, as it can be seen from the following diagram

\begin{equation*}
\begin{tikzcd}
Y^{i} \ar[dotted]{dd}{g^i} \ar[dotted]{rr}{d^i_Y} \drar{\gamma^i} && Y^{i+1}  \ar{dd}{g^{i+1}} \\
& Z^i \urar{\alpha^{i}} \dlar{\beta^i} \\
X^{i} \ar{rr}{d^i_X} && X^{i+1}
\end{tikzcd}
\end{equation*}

\noindent $(Z^i, \alpha^i, \beta^i)$ is a pullback of $(g^{i+1}, d^i_X)$ and $\gamma^i: Y^i \lrt Z^i$ is a right $\SC$-approximation of $Z^i$.
\end{remark}

\begin{remark}
Let $(\SA, \SE)$ be an exact category, $\SC$ be an $n$-cluster tilting subcategory of $\SA$ and $\SX$ denote the $n$-exact structure of $\SC$. For each $C, C' \in \SC$, set
\[\Yext^n_{\SC}(C, C'):= \{[\delta]  \mid  \delta : C' \rightarrowtail C^1 \rt C^2 \rt \cdots \rt C^n \twoheadrightarrow C \ {\rm is \ an} \ \SE\mbox{-}{\rm acyclic \ complex \ in} \ \SC \}, \]
where $[\delta]$ denotes the Yoneda equivalence class of $\delta$ similar to what is defined in \ref{Yoneda-Ext}. One can follow the same argument as in Subsection 6.2 of \cite{FS} to deduce that $\Yext^n_{\SC}(C, C')$, based on $n$-pullbacks and $n$-pushouts in $\SC$, could be equipped with a Baer sum making it into a group. In fact, $\Yext^n_{\SC}( - , - )$ becomes a bifunctor on $\SC$, see also \cite[Theorem IV.9.1]{HS} for a proof in an abelian category.

Moreover, for a morphism $f: X' \lrt Y'$ in $\SC$, similar argument as in \cite[pp. 197-198]{FS}, implies that the induced morphism
\[f^*:=\Yext^n_{\SC}(X, f): \Yext^n_{\SC}(X, X') \lrt \Yext^n_{\SC}(X, Y')\]
for every $X \in \SC$, can be computed in terms of $n$-pushout diagrams. That is for every element
\[\eta: ~~~~~~~X' \rightarrowtail X^1 \rt X^2 \rt \cdots \rt X^n \twoheadrightarrow X \]
of $\Yext^n_{\SC}(X, X'),$ $f^*(\eta)$ is the $n$-exact sequence obtained by extending $n$-pushout of $\eta$ along $f$, that is
\begin{equation*}
\begin{tikzcd}
\eta:   & X' \rar[tail] \dar{f} & X^1 \rar \dar & \cdots \rar & X^{n-1} \rar \dar & X^n \rar[two heads] \dar & X  \dar[equals]  \\
\eta':  & Y'  \rar[tail] & Y^1 \rar & \cdots \rar & Y^{n-1} \rar & Y^n \rar[two heads] & X
\end{tikzcd}
\end{equation*}
For a similar discussion in the case where $\SA=\mmod\La$ is an abelian category see \cite[Remark 3.8(b) and Lemma 3.13]{Fe}. Similarly the morphism
\[f_*:=\Yext^n_{\SC}(f, X): \Yext^n_{\SC}(Y', X) \lrt \Yext^n_{\SC}(X', X)\]
for every $X \in \SC$ is defined using $n$-pullback diagrams.
\end{remark}

\begin{remark}
Let $R$ be a commutative local ring and $\SA$ be an abelian $R$-category with enough projective objects. Let $\SB$ be a resolving subcategory of $\SA$, that is, $\SB$ contains projective objects of $\SA$ and is closed under extensions and kernel of surjections. Moreover assume that the stable category $\underline{\SB}$ is a dualising $R$-variety \cite{AR1}. Since $\SB$ is a full extension closed subcategory of abelian category $\SA$, it is an exact category. Let $\SE$ denote the exact structure of $\SB$. Now let $\SC$ be an $n$-cluster tilting subcategory of $\SB$. So $\SC$ is an $n$-exact category. Let $\SX$ denote the $n$-exact structure of $\SC$. By Section 2.5.1 of \cite{I2} higher-dimensional Auslander-Reiten theory exists for subcategory $\SC$ of $\SB$. Hence, by \cite[A.1. Proposition]{I1}, if $X^0, X^{n+1} \in\SC$, every element in $\Ext^n_{\SE}(X^{n+1}, X^0)$ is Yoneda equivalent to an admissible $n$-exact sequence
\[\eta: ~~~~~~~X^0\rightarrowtail X^1 \rt \cdots \rt X^n  \twoheadrightarrow X^{n+1}\]
in $\SC$, i.e. all middle terms lie in $\SC$. Therefore, in this case, the bifunctor $\Yext^n_{\SC}$ is the same as the bifunctor $\Ext^n_{\SE}$ restricted to $\SC$. See \cite[Remark 2.9]{Fe} for the special case $\SA=\SB=\mmod\La$, where $\La$ is a finite dimensional artin algebra.
\end{remark}

\begin{notation}
In view of the above remark, under some mild conditions, bifunctor $\Yext^n_{\SC}( - , - )$ is nothing but the usual bifunctor $\Ext^i_{\SE}( - , - )$, recalled in \ref{Yoneda-Ext}, restricted to $\SC$. Because of this fact, from now on, $\Yext^n_{\SC}( - , - )$ will be denoted by $\Ext^n_{\SX}( - , - ).$
\end{notation}

\begin{example}
As an example of the above discussion, set $\SA=\mmod\La$ and let $\SB=\Gprj\La$ be the full subcategory of $\SA$ consisting of all Gorenstein-projective $\La$-modules. By \cite[Theorem 3.16]{AHS}, if $\SC$ is an $n$-cluster tilting subcategory of $\mmod\La$ with enough injectives, then $\SC \cap \Gprj\La$ is an $n$-cluster tilting subcategory with enough injectives of $\Gprj\La$, see also \cite[Theorem 7.3]{Kv}. Hence in this case, $\Ext^n_{\SC \cap \Gprj\La}$ is the same as the functor $\Ext^n_{\Gprj\La}$ on $\SC \cap \Gprj\La$.
\end{example}

Let $(\SA, \SE)$ be an exact category and $f:X\rt A$ and $g:B\rt Y$ be morphisms in $\SA$. Then it is known \cite[Remark 6.7]{FS} that
\[\Ext^n_{\SE}(f, g)=\Ext^n_{\SE}(X, g)\Ext^n_{\SE}(f, B)=\Ext^n_{\SE}(f, Y)\Ext^n_{\SE}(A, g).\]
Here we show that the same result holds true for $\Ext^n_{\SX}$, where $(\SC, \SX)$ is an $n$-cluster tilting subcategory of $(\SA, \SE)$.

\begin{proposition}\label{Ext}
Let $\SC$ be an $n$-cluster tilting subcategory of the exact category $(\SA, \SE)$. Let $f:X\rt A$ and $g:A'\rt Y$ be morphisms in $\SC$. Then
\[\Ext^n_{\SX}(f, g)=\Ext^n_{\SX}(X, g)\Ext^n_{\SX}(f, A')=\Ext^n_{\SX}(f, Y)\Ext^n_{\SX}(A, g),\]
that is, the following diagram of abelian groups is commutative
\[ \xymatrix{  \Ext^n_{\SX}(A, A') \ar[rr]^<<<<<<<<<<{\Ext^n_{\SX}(f, A')}\ar[d]^{\Ext^n_{\SX}(A,g)} && \Ext^n_{\SX}(X,  A')\ar[d]^{\Ext^n_{\SX}(X, g)}\\
\Ext^n_{\SX}(A, Y)  \ar[rr]^<<<<<<<<<{\Ext^n_{\SX}(f, Y)} && \Ext^n_{\SX}(X, Y).   }\]
\end{proposition}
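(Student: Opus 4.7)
My plan is to fix a representative admissible $n$-exact sequence
\[\eta: A' \rightarrowtail X^1 \rt X^2 \rt \cdots \rt X^n \twoheadrightarrow A\]
for a class in $\Ext^n_{\SX}(A, A')$, construct both $\Ext^n_{\SX}(X,g)\Ext^n_{\SX}(f,A')(\eta)$ and $\Ext^n_{\SX}(f,Y)\Ext^n_{\SX}(A,g)(\eta)$ by alternating $n$-pullbacks along $f$ and $n$-pushouts along $g$ in the two possible orders, and exhibit a zigzag of morphisms of $n$-exact sequences between the results that restricts to the identity on the outer terms $Y$ and $X$. This will witness their Yoneda equivalence, which is exactly what the commutativity of the diagram requires.

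The technical engine is the explicit construction of $n$-pullbacks described in Remark~\ref{Rem-Construction} (and its dual for $n$-pushouts): each intermediate square is obtained by forming the ordinary pullback in the ambient exact category $(\SA,\SE)$ and then taking a right $\SC$-approximation (dually a left $\SC$-approximation for pushouts). Because the analogous identity
\[\Ext^n_{\SE}(f,g)=\Ext^n_{\SE}(X,g)\Ext^n_{\SE}(f,A')=\Ext^n_{\SE}(f,Y)\Ext^n_{\SE}(A,g)\]
already holds for the bifunctor $\Ext^n_{\SE}$ on $\SA$ by \cite[Remark 6.7]{FS}, the pullback/pushout interchange is valid in $\SA$. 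I would pass this interchange through the approximation steps by invoking the universal property of ordinary pullbacks in $\SA$ to produce comparison morphisms between the two iterated constructions; these comparison morphisms automatically factor through the $\SC$-approximations because both iterated constructions already have all intermediate terms in $\SC$.

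Concretely, I would proceed column by column from position $n+1$ down to position $0$. At each step, having compared the rightmost columns, I use the universal property of the pullback in $\SA$ underlying the next $n$-pullback square to produce a morphism between the next columns, and then use the $\SC$-approximation at the target to lift it to a morphism in $\SC$. The pushout steps are handled dually. This yields a diagram of two $n$-exact sequences joined by a chain map which is the identity on the end-terms $Y$ and $X$, giving the required Yoneda equivalence.

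The main obstacle I anticipate is coherence: the right $\SC$-approximations used at each stage of an $n$-pullback are only defined up to (non-unique) factorization, so the comparison morphisms on intermediate terms may not be uniquely determined and hence the comparison diagram may not commute strictly in the middle. This is, however, not a real issue: Yoneda equivalence only demands that the endpoint components agree, and any two such comparisons differ by a morphism of $n$-exact sequences fixing the endpoints, hence represent the same class. Therefore the interchange law in $\Ext^n_{\SX}$ reduces cleanly to the already-established bifunctoriality of $\Ext^n_{\SE}$ via the approximation construction, completing the proof.
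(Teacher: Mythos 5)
Your plan takes a genuinely different route from the paper's, and there is a gap in it. The paper stays entirely inside the $n$-exact category $(\SC,\SX)$: it forms $\gamma$ as the $n$-pushout along $g$ of the $n$-pullback of $\eta$ along $f$, invokes Jasso's Proposition~4.9 to extend this to a commutative cube, and then uses the characterization in Jasso's Proposition~4.8 (the dual of statement $(iv)\Rightarrow(i)$) to recognize the front face of the cube as an $n$-pullback diagram, so that this single $n$-exact sequence $\gamma$ simultaneously represents $\Ext^n_{\SX}(X,g)\Ext^n_{\SX}(f,A')(\eta)$ and $\Ext^n_{\SX}(f,Y)\Ext^n_{\SX}(A,g)(\eta)$. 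All of the coherence you worry about is packaged once and for all in those two results of Jasso, and nothing is descended to $\SA$.

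The gap in your plan lies precisely where you anticipate the main obstacle, and the resolution you offer does not close it. Yoneda equivalence is not the assertion that two representatives have the same end-terms; it requires an actual morphism of $n$-exact sequences, that is, a commuting chain map, which restricts to the identity on the two end-terms (or a zigzag of such morphisms). If, as you concede, your column-by-column comparison ``may not commute strictly in the middle,'' then you have not produced a chain map and hence have not exhibited a Yoneda equivalence. The further remark that ``any two such comparisons differ by a morphism of $n$-exact sequences fixing the endpoints'' presupposes that at least one comparison has been constructed, which is exactly what is in question. There is also a structural obstacle you do not engage with: one of your iterated constructions interleaves $\SA$-pullbacks followed by right $\SC$-approximations, while the other interleaves $\SA$-pushouts followed by left $\SC$-approximations, and at the intermediate columns there is no universal property that directly hands you the morphism you propose to ``lift'' through an approximation; the factorization property of approximations only applies once you already have a morphism from a $\SC$-object to the unapproximated target, and producing that morphism compatibly at every column is the hard part. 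This is exactly the coherence Jasso's Proposition~4.9 supplies. I suggest you base the argument on that result together with Proposition~4.8, as the paper does, rather than attempt to reconstruct the cube by hand in $\SA$.
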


\begin{proof}
Suppose that $\eta:~~~~A' \rightarrowtail X^1 \rt \cdots \rt X^n \twoheadrightarrow A\in\Ext^n_{\SX}(A, A')$ is given. By taking $n$-pullback of $\eta$ along $f$ and $n$-pushout of it along $g$, we get the following diagram.
\[
 \begin{tikzcd}
A'\rar[tail] \dar[equals] & U^1\rar\dar &\cdots\rar & U^n\rar[two heads] \dar & X\dar{f}\\
A' \rar[tail] \dar{g} & X^1\rar\dar &\cdots\rar & X^{n}\rar[two heads] \dar & A\dar[equals]\\
Y\rar[tail] & V^1\rar &\cdots\rar & V^n \rar[two heads] & A
 \end{tikzcd}
 \]
Now by taking $n$-pushout of $A'\rt U^1 \rt \cdots \rt U^n$ along $g$ and applying Proposition 4.9 of \cite{Ja}, we get the following diagram
{\footnotesize{
\begin{equation*}
\begin{tikzcd}[column sep=small]
 &  &  A'\dlar{g}\ar[tail]{rr}\ar[equals]{dd} &&U^1\dlar\ar{rr}\ar{dd} &&\cdots\dlar[dotted]\ar{rr}\ar[dotted]{dd}&&U^n\ar{dd} \dlar\ar[two heads]{rr}&&X\dlar[equals]\ar{dd}{f}\\
 \gamma: &     Y\ar[tail]{rr}\ar[equals]{dd} &&W^1\ar{rr}\ar{dd}&&\cdots\ar{rr}\ar[dotted]{dd}&& W^n\ar[two heads]{rr}\ar{dd}&&X\ar{dd}[near start]{f}&& \\
    &\eta: ~~~~~~~~~&A'\dlar{g}\ar[tail]{rr}&&X^1\ar{rr}\dlar&&\cdots\dlar[dotted]\ar{rr}&&X^n\dlar\ar[two heads]{rr}&&A\dlar[equals]\\
    & Y\ar[tail]{rr}&&V^1\ar{rr}&&\cdots\ar{rr}&&V^n\ar[two heads]{rr}&&A
\end{tikzcd}
\end{equation*}}}

\noindent Note that by (dual of) the statement $(iv) \Rightarrow (i)$ of Proposition 4.8 of \cite{Ja}, diagram
\begin{equation*}
\begin{tikzcd}
  &     Y\ar[tail]{r}\ar[equals]{d} &W^1\ar{r}\ar{d}&\cdots\ar{r}& W^n\ar[two heads]{r}\ar{d}&X\ar{d}{f}& \\
    & Y\ar[tail]{r}&V^1\ar{r}&\cdots\ar{r}&V^n\ar[two heads]{r}&A
\end{tikzcd}
\end{equation*}
is an $n$-pullback diagram. Hence the admissible $n$-exact sequence $\gamma$ can be described as
\[\Ext^n_{\SX}(X, g)\Ext^n_{\SX}(f, A')(\eta)=\gamma = \Ext^n_{\SX}(f, Y)\Ext^n_{\SX}(A,g)(\eta).\]
\end{proof}

We end this section by the definition of $n$-proper classes that will be used later.

\s {\sc $n$-proper classes.}\label{proper class}
Let $(\SC, \SX)$ be an $n$-cluster tilting subcategory of an exact category $(\SA, \SE)$. A class $\SF$ of $\SE$-acyclic complexes of length $n$ in $\SC$ is called an $n$-proper class if it contains all split (contractible) $\SE$-acyclic complexes, is closed under isomorphisms and finite direct sums and is closed under $n$-pullbacks and $n$-pushouts along any other morphisms in $\SA$.

It is easy to see that any $n$-proper class of $\SE$-acyclic complexes of length $n$ gives rise to an additive subfunctor $\Ext^n_{\SF}$ of $\Ext^n_{\SX}$. On the other hand, any additive subfunctor of  $\Ext^n_{\SX}$ induces an $n$-proper class of $\SE$-acyclic complexes of length $n$. For a similar discussion in the classical case $n=1$, see \cite[1.2]{DRSS} for exact categories and \cite{ASo} for abelian categories.

\section{Higher ideal cotorsion pairs}\label{Higher Cotorsion}
In this section we introduce and study a higher version of the notion of ideal cotorsion pairs \cite[Definition 12]{FGHT}. Our setting for this purpose, and throughout the paper, is $n$-cluster tilting subcategories of exact categories. Let us begin by recalling the notion of an ideal of an additive category.

\s{\sc Ideal approximation theory.}
Let $\SA$ be an additive category. A two sided ideal $\SI$ of $\SA$ is a subfunctor
\[ \SI( - , - ): \SA^{\op}\times\SA \lrt \SA b\]
of the bifunctor $\SA ( - , - )$ that associates to every pair $A$ and $A'$ of objects in $\SA$ a subgroup $\SI(A, A')\subseteq \SA(A, A')$ such that
\begin{itemize}
\item[$(i)$] If $f\in \SI(A, A')$ and $g\in \SA(A', C)$, then $gf\in \SI(A, C)$,
\item[$(ii)$] If $f\in \SI(A, A')$ and $g\in \SA(D, A)$, then $fg\in\SI(D, A')$.
\end{itemize}

Let $\SI$ be an ideal of $\SA$ and $A \in \SA$ be an object of $\SA$. An $\SI$-precover of $A$ is a morphism $C \st{\va}{\lrt} A$ in $\SI$ such that any other morphism $C' \st{\va'}{\lrt} A$ in $\SI$ factors through $\va$, i.e. there exists a morphism $\psi: C' \lrt C$ such that $\va\psi=\va'$. $\SI$ is called a precovering ideal if every object $A \in \SA$ admits an $\SI$-precover. The notions of $\SI$-preenvelope and preenveloping ideals are defined dually. See \cite[Section 3]{H1} for the special case in module category and \cite{FGHT} for the general definitions and properties in an exact category.

We say that an object $A$ of $\SA$ is in $\SI$ if the identity morphism $1_A$ is in $\SI(A, A)$.

\begin{notation}
Let $\SM$ be a collection of morphisms in $\SC$. We define the right and left orthogonals of $\SM$, respectively, as follows
\[\SM^{\perp}:= \lbrace g: ~~\Ext^n_{\SX}(m, g)=0~~~~for~~all ~~~m\in\SM\rbrace,\]
\[{}^{\perp}\SM:= \lbrace f: ~~\Ext^n_{\SX}(f, m)=0~~~~for~~all ~~~m\in\SM\rbrace.\]
\end{notation}

Note that maybe the better notations for the above orthogonals are $\SM^{\perp_n}$  and ${}^{\perp_n}\SM$. Since $n$ is fixed throughout, we drop it, both for the ease of notation and also to avoid confusion with the orthogonal notations used in the definition of $n$-cluster tilting subcategories. \\

Following proposition is a higher version of \cite[Proposition 9]{FGHT}. In this theorem and also throughout the section, for the ease of notation, we shall use $\Ext^n$ instead of $\Ext^n_{\SX}$.

\begin{proposition}\label{9}
 Let $\SM$ be a collection of morphisms in $\SC$. Then both $\SM^{\perp}$ and ${}^{\perp}\SM$ are ideals of $\SC$.
\end{proposition}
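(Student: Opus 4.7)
The plan is to verify, for $\SM^\perp$, the three defining properties of a two-sided ideal of $\SC$: (a) for every pair $B, Y \in \SC$, the set $\SM^\perp(B,Y)$ is a subgroup of $\SC(B,Y)$; (b) $\SM^\perp$ is closed under post-composition with arbitrary morphisms of $\SC$; and (c) $\SM^\perp$ is closed under pre-composition with arbitrary morphisms of $\SC$. The corresponding statements for ${}^\perp\SM$ are exactly dual, obtained by interchanging the roles of the first and second slot of $\Ext^n_{\SX}$, so I would write out the argument for $\SM^\perp$ only and close with a remark on the symmetry.

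For the subgroup condition (a), I would fix $m\colon X \to A$ in $\SM$ and use additivity of the bifunctor $\Ext^n_{\SX}$: the map $g \mapsto \Ext^n_{\SX}(m,g)$ is a homomorphism of abelian groups, so its kernel is a subgroup of $\SC(B,Y)$ containing the zero morphism; intersecting these kernels over all $m \in \SM$ yields $\SM^\perp(B,Y)$. For the closure conditions, the key tool is Proposition~\ref{Ext}. Given $g \in \SM^\perp(B,Y)$, morphisms $h_1\colon B' \to B$ and $h_2\colon Y \to Y'$ in $\SC$, and an arbitrary $m\colon X \to A$ in $\SM$, covariant functoriality in the second slot together with Proposition~\ref{Ext} yields
\[
\Ext^n_{\SX}(m, g h_1) \;=\; \Ext^n_{\SX}(m, Y)\,\Ext^n_{\SX}(A, g)\,\Ext^n_{\SX}(A, h_1) \;=\; \Ext^n_{\SX}(m, g)\,\Ext^n_{\SX}(A, h_1) \;=\; 0,
\]
while contravariant functoriality in the first slot and Proposition~\ref{Ext} yield
\[
\Ext^n_{\SX}(m, h_2 g) \;=\; \Ext^n_{\SX}(X, h_2)\,\Ext^n_{\SX}(X, g)\,\Ext^n_{\SX}(m, B) \;=\; \Ext^n_{\SX}(X, h_2)\,\Ext^n_{\SX}(m, g) \;=\; 0.
\]
Hence both $gh_1$ and $h_2 g$ lie in $\SM^\perp$, establishing (b) and (c).

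I do not expect any real obstacle: the entire verification is a formal bifunctorial computation resting on two legitimate ingredients, namely the additivity of $\Ext^n_{\SX}$ in each slot and the compatibility identity of Proposition~\ref{Ext}. The only step that genuinely relies on the $n$-cluster tilting hypothesis is the invocation of Proposition~\ref{Ext}, which is what permits peeling off the pre- or post-composed morphism into an outer factor so that the vanishing of $\Ext^n_{\SX}(m,g)$ can be propagated; once that compatibility is in hand, the proof is a direct transcription of the classical $n = 1$ argument of \cite[Proposition~9]{FGHT}, and the case of ${}^\perp\SM$ goes through verbatim with the roles of the two slots interchanged.
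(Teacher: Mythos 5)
Your proposal is correct and follows essentially the same route as the paper: both arguments run entirely on Proposition~\ref{Ext}, using it to peel the composed morphism off $\Ext^n_{\SX}(m,g)$ and thereby reduce each closure condition to the vanishing of $\Ext^n_{\SX}(m,g)$ for $m\in\SM$. The only superficial difference is that the paper spells out the additivity step $\Ext^n(f,g_1+g_2)=\Ext^n(f,g_1)+\Ext^n(f,g_2)$ explicitly via Proposition~\ref{Ext}, whereas you cite it as additivity of the bifunctor; since Proposition~\ref{Ext} is precisely what makes $\Ext^n_{\SX}(-,-)$ a well-defined (additive) bifunctor on morphisms, this is the same content.
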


\begin{proof}
We just prove that $\SM^{\perp}$ is an ideal, the proof of the other one follows similarly. Let $g_1, g_2: A' \rt Y$ be morphisms in $\SM^{\perp}$. We should show that not only $g_1+g_2\in \SM^{\perp}$ but also for all morphisms $h:Y \rt Y'$ and $k: Y'' \rt A'$ in $\SC$, $hg_1, g_1k \in\SM^{\perp}$.
These all follows using the equalities of Proposition \ref{Ext}. To see this, let $f: X \rt A$ be a morphism in $\SM$ and consider the following equalities
\begin{align*}
 \ \ \ \ \ \ \ \ \ \ \ \ \ \ \ \Ext^n(f, g_1+g_2)  =&\ \Ext^n(X, g_1+g_2)\Ext^n(f, A') \\
=& \ [\Ext^n(X,  g_1) + \Ext^n(X, g_2)]\Ext^n(f, A')\\
=&  ~\Ext^n(X, g_1)\Ext^n(f, A')+ \Ext^n(X, g_2)\Ext^n(f, A')\\
=&~ \Ext^n(f, g_1)+ \Ext^n(f, g_2)=0.
\end{align*}
to deduce that $g_1+g_2\in \SM^{\perp}$. To prove $hg_1\in\SM^{\perp}$, by another use of Proposition \ref{Ext} we have the following equalities
\begin{align*}
\ \Ext^n(f, hg_1)  =&\ \Ext^n(X, hg_1)\Ext^n(f, A') \\
=&  ~\Ext^n(X, h)\Ext^n(X, g_1) \Ext^n(f, A')\\
=&~ \Ext^n(X, h)\Ext^n(f, g_1)=0,
\end{align*}
which, in turn, implies that $hg_1\in\SM^{\perp}$.  The statement $g_1k \in\SM^{\perp}$ follows similarly. Hence the proof is complete.
\end{proof}

The above proposition, in particular, implies that $\SI^{\perp}$ and ${}^{\perp}\SI$ are ideals of $\SC$, whenever $\SI$ is so.

\begin{definition}
Let $\SI$ and $\SJ$ be ideals of $\SC$. A pair $(\SI, \SJ)$ is called an $n$-orthogonal pair of ideals if for every $f \in \SI$ and every $g \in \SJ$, the pair $(f, g)$ is an $n$-orthogonal pair of morphisms, that is, the morphism
\[\Ext^n(f, g): \Ext^n(A, B)\lrt \Ext^n(X, Y)\]
of abelian groups is zero.
\end{definition}

For instance, the  pair $(1_A, g)$ is an $n$-orthogonal pair of morphisms if and only if $\Ext^n(A, g)=0$. We infer this using the following equalities
 \[\Ext^n(1_A, g)= \Ext^n(A, g)\Ext^n(1_A, B)= \Ext^n(A, g),\]
proved in Proposition \ref{Ext}. Similar argument implies that the pair $(1_A, 1_B)$ is $n$-orthogonal pair of morphisms if and only if $\Ext^n(A, B)=0$.

\begin{definition}
The $n$-orthogonal pair $(\SI,\SJ)$ of ideals in $\SC$ is called an $n$-ideal cotorsion pair if $\SI = {}^{\perp}{\SJ}$ and $\SJ= \SI^{\perp}$.
\end{definition}

\begin{definition}(see \cite[page 762]{FGHT})\label{Def-Special Precover}
Let $\SI$ be an ideal of $\SC$ and $A \in \SC$ be an arbitrary object. A morphism $i: X^n \rt A$ in $\SI$ is called a special $\SI$-precover of $A$ if it obtained as the rightmost morphism in an $n$-pushout of an admissible $n$-exact sequence $\eta$ along a morphism $j: Y {\lrt} A' \in \SI^{\perp}$. It is depicted by the following diagram
\begin{equation*}
\begin{tikzcd}
\eta: & Y\dar{j}\rar[tail] & Y^1\rar\dar& \cdots \rar& Y^n\rar[two heads]\dar& A\dar[equals]\\
\eta': & A'\rar[tail] & X^1\rar& \cdots\rar& X^n\rar[two heads]{i}& A.
\end{tikzcd}
\end{equation*}
The ideal $\SI$ is called a special precovering ideal if every object $A\in \SC$ has a special $\SI$-precover. The notions of special $\SI$-preenvelopes  and special preenveloping ideals are defined dually.
\end{definition}

\begin{remark}
Let $i: X^n \rt A$ be a special $\SI$-precover of $A$. Then it, necessarily, is an $\SI$-precover of $A$. To see this, let $i': X'\rt A$ be a morphism in $\SI$. We show that it factors through $i$. By the above diagram,
$\eta'= \Ext^n(A, j)(\eta)$.
By assumption $j \in\SI^{\perp}$, so the $n$-pullback of $\eta'$ along $i'$ is
\[\Ext^n(i', A')(\eta')= \Ext^n(i', A')\Ext^n(A, j)(\eta)= \Ext^n(i',j)(\eta)=0,\]
Hence we have the desired factorization
\begin{equation*}
\begin{tikzcd}
& X'\dlar[dotted]\dar{i'}\\
X^n \rar{i}& A.
\end{tikzcd}
\end{equation*}
\end{remark}

\begin{definition}\label{12}
An $n$-ideal cotorsion pair $(\SI,\SJ)$ is called complete if every object in $\SC$ admits a special $\SI$-precover and a special $\SJ$-preenvelope.
\end{definition}

In the following theorem we present a source of examples of $n$-ideal cotorsion pairs.

\begin{theorem}\label{13}
Let $\SI$ be a special precovering ideal of $\SC$. Then the $n$-orthogonal pair of ideals $(\SI, \SI^{\perp})$ is an $n$-ideal cotorsion pair.
\end{theorem}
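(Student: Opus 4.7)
The plan is to verify the two equalities required by the definition of an $n$-ideal cotorsion pair: $\SI^{\perp}=\SI^{\perp}$ (trivial, as we have defined the right-hand ideal to be exactly $\SI^{\perp}$) and $\SI={}^{\perp}(\SI^{\perp})$. That $(\SI,\SI^{\perp})$ is an $n$-orthogonal pair of ideals is immediate from the definition of $\SI^{\perp}$ and Proposition \ref{9}, so the real content is the equality $\SI={}^{\perp}(\SI^{\perp})$.

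The inclusion $\SI\subseteq{}^{\perp}(\SI^{\perp})$ is tautological: if $f\in\SI$ and $g\in\SI^{\perp}$, then by definition of $\SI^{\perp}$ we have $\Ext^n(f,g)=0$, so $f\in{}^{\perp}(\SI^{\perp})$.

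For the reverse inclusion, suppose $f:X\lrt A$ lies in ${}^{\perp}(\SI^{\perp})$. I would use the special precovering hypothesis to produce, for the object $A$, a special $\SI$-precover $i:X^n\lrt A$ arising from an $n$-pushout
\begin{equation*}
\begin{tikzcd}
\eta: & Y\dar{j}\rar[tail] & Y^1\rar\dar& \cdots \rar& Y^n\rar[two heads]\dar& A\dar[equals]\\
\eta': & A'\rar[tail] & X^1\rar& \cdots\rar& X^n\rar[two heads]{i}& A
\end{tikzcd}
\end{equation*}
with $j\in\SI^{\perp}$. By construction $\eta'=\Ext^n(A,j)(\eta)$, and applying Proposition \ref{Ext} together with $f\in{}^{\perp}(\SI^{\perp})$ and $j\in\SI^{\perp}$ gives
\[\Ext^n(f,A')(\eta')=\Ext^n(f,A')\Ext^n(A,j)(\eta)=\Ext^n(f,j)(\eta)=0.\]
This is exactly the same computation used in the remark following Definition \ref{Def-Special Precover} to show that a special $\SI$-precover is an $\SI$-precover; running that argument with $f$ in place of $i'$ yields a factorization $f=i\circ g$ for some $g:X\lrt X^n$. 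Since $i\in\SI$ and $\SI$ is a two-sided ideal, $f=i\circ g\in\SI$, as required.

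The main (and essentially the only) obstacle is justifying the factorization step: one has to know that vanishing of the $n$-pullback $\Ext^n(f,A')(\eta')$ forces $f$ to lift through $i$. This is precisely the content of the remark after Definition \ref{Def-Special Precover}, which was built on the $n$-pullback construction of Remark \ref{Rem-Construction} and the functorial equalities of Proposition \ref{Ext}, so the theorem is obtained by combining that observation with the special precover hypothesis. No new technology beyond what was established earlier in the section is needed.
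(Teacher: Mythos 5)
Your proof is correct and follows essentially the same route as the paper's: both reduce to showing $\SI={}^{\perp}(\SI^{\perp})$, use the special $\SI$-precover of $A$ together with Proposition~\ref{Ext} to compute that $\Ext^n(f,A')(\eta')=0$, and deduce the factorization $f=ig$ (hence $f\in\SI$) from the vanishing of that pullback class. The only cosmetic difference is that you explicitly cite the remark following Definition~\ref{Def-Special Precover} for the factorization step, whereas the paper carries out the same computation inline.
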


\begin{proof}
By definition, we just need to show that $\SI={}^{\perp}(\SI^{\perp})$. The inclusion $\SI\subseteq {}^{\perp}(\SI^{\perp})$ is trivial. So assume that $i': X' \rt A$ is in ${}^{\perp}(\SI^{\perp})$. We show that $i'\in\SI$. Since $\SI$ is a special precovering ideal, there is a special $\SI$-precover $i:X^n\rt A$ of $A$, which is the rightmost morphism in an admissible $n$-exact sequence
\[\eta': \ \ A' \rightarrowtail X^1 \rt \cdots\rt X^n \stackrel{i}\twoheadrightarrow A\]
that is obtained from an $n$-pushout diagram of an admissible $n$-exact sequence $\eta$ along a morphism $j: Y\lrt A'$ in $\SI^{\perp}$.
That is $\eta' = \Ext^n(A, j)(\eta)$, for some admissible $n$-exact sequence $\eta$. Now by taking the $n$-pullback of $\eta'$ along $i'$ and using the assumption that $i': X'\lrt A \in {}^{\perp}(\SI^{\perp})$, we have
\[\Ext^n(i', A')(\eta')= \Ext^n(i', A')\Ext^n(A, j)(\eta)=\Ext^n(i', j)(\eta)=0.\] Hence we get the factorization
\begin{equation*}
\begin{tikzcd}
& X'\dlar{g}\dar{i'}\\
X^n \rar{i}& A.
\end{tikzcd}
\end{equation*}
which implies that $i'=ig \in \SI$. The proof is hence complete.
\end{proof}

\section{Higher phantom ideals}\label{Higher Phantom}
In this section, we introduce and study the notion of phantom morphisms in $n$-cluster tilting subcategories of exact categories. It will provide a higher version of the notion of phantom morphisms introduced and studied by Herzog \cite{H1, H2}.

\begin{setup}
Throughout the section, $\SC$ is an $n$-cluster tilting subcategory of an exact category $(\SA, \SE)$ with $n$-exact structure $\SX$.  Moreover, $\SF$ denotes an additive subfunctor of $\Ext^n_{\SX}$. As it is explained in \ref{proper class}, every subfunctor $\SF$ of $\Ext^n_{\SX}$ induces an $n$-proper subclass of $\SX$, that also will be denoted by $\SF$. The admissible $n$-exact sequences in $\SF$ are called $\SF$-admissible $n$-exact sequences. Throughout we denote $\Ext^n_{\SX}$ by $\Ext^n$, for simplicity.
\end{setup}

Let us begin with the following definition.

\begin{definition}
With the above notations, a morphism $\varphi$ in $\SC$ is called an $n$-$\SF$-phantom morphism if the $n$-pullback of every $\SX$-admissible $n$-exact sequence along $\varphi$ is an $\SF$-admissible $n$-exact sequence. In other words, $\varphi: X \lrt A$ in $\SC$ is an $n$-$\SF$-phantom morphism if for every object $A'$ in $\SC$, the morphism
\[\Ext^n(\varphi, A'): \Ext^n(A, A') \lrt \Ext^n(X, A')\]
of abelian groups takes values in the subgroup $\SF(X, A')$. We denote the collection of  all $n$-$\SF$-phantom morphisms by $\Phi(\SF)$. Note that it is easy to see that $\Phi(\SF)$ forms an ideal of $\SC$.
\end{definition}

To see an example of phantom morphisms we need to recall following definitions/notations from \cite[page 759]{FGHT}.

\begin{definition}
A morphism $f: X \lrt A$ in $\SC$ is called $\SF$-projective if  for every object $B$ in $\SC$, $\SF(f, B)=0$. In other words,  $f:X\rt A$ in $\SC$ is $\SF$-projective if the $n$-pullback of any $\SF$-admissible $n$-exact sequence along $f$ is contractible. An object $A$ in $\SC$ is called $\SF$-projective if the identity morphism is an $\SF$-projective morphism. The ideal of $\SF$-projective morphisms is denoted by $\SF\mbox{-}{\rm proj}$. The notions of $\SF$-injective morphisms and $\SF$-injective objects are defined dually. The ideal of $\SF$-injective morphisms is denoted by $\SF\mbox{-}{\rm inj}$.
\end{definition}

\begin{example}{\sc ($n$-pure phantom morphisms)}
In this example, we assume that $(\SA, \SE)$ is an exact category with arbitrary direct sums. An object $C$ of $\SA$ is called a compact object \cite[Definition 2.3]{EN} if any morphism from $C$ to a nonempty coproduct $\bigoplus_{i \in I}A_i$ factors through a sub-coproduct $\bigoplus_{j \in J}A_j$, where $J \subseteq I$ is a finite subset.
$\SA$ is called compactly generated if for every $A \in \SA$ there is an admissible epimorphism $h: \bigoplus_{i \in I}C_i \lrt A$ such that $C_i$ is compact for each $i \in I$.  

Let $(\SA, \SE)$ be a compactly generated exact category and $(\SC, \SX)$ be an $n$-cluster tilting subcategory of $\SA$ containing all compact objects. An admissible $n$-exact sequence
\[A' \rightarrowtail X^1 \rightarrow \cdots \rightarrow X^n \twoheadrightarrow A \]
in $\SC$ is called pure $n$-exact if for every compact object $C$, the induced sequence
\[0 \lrt \SC(C, A') \lrt \SC(C, X^1) \lrt \SC(C, X^2) \lrt \cdots \lrt \SC(C, X^n) \lrt \SC(C, A) \lrt 0\]
of abelian groups is acyclic \cite[Definition 2.4]{EN}.

We claim that the collection of all pure $n$-exact sequences forms an $n$-proper class of $\SX$. To see this, note that it is obviously closed under isomorphisms and also contains all contractible $n$-exact sequences. It also is closed under $n$-pushout by any other morphism. We show that it is closed under $n$-pullback along any other morphism. Let
\[\eta: A' \rightarrowtail X^1 \rightarrow \cdots \rightarrow X^n \twoheadrightarrow A \]
be a pure $n$-exact sequence and consider its $n$-pullback along a morphism $A'' \lrt A$

\begin{equation*}
\begin{tikzcd}
\delta: & A' \dar[equals] \rar[tail] & Y^1 \rar\dar{g}& Y^2\rar\dar &\cdots\rar & Y^n\rar[two heads]{d^n_Y} \dar & A'' \dar{h}\\
\eta: & A' \rar[tail] & X^1\rar & X^2\rar &\cdots\rar & X^n \rar[two heads]{d^n_X} & A
\end{tikzcd}
\end{equation*}

Now let $u: C \lrt A''$ be a morphism from a compact object $C$ to $A''$. Since $\eta$ is pure $n$-exact, $hu$ factors through $d^n_X$. Now the construction of the $n$-pullback diagram explained in Remark \ref{Rem-Construction}, implies easily that $u$ factors also through $d^n_Y$. Hence the claim follows.

So, by \ref{proper class}, it gives rise to a subfunctor of $\Ext^n$, that we will denote by $n$-$\rm{Pext}$. By an $n$-pure phantom morphism we mean an $n$-$\rm{Pext}$-phantom morphism. The collection of all $n$-pure phantom morphisms will be denoted by $\Phi(n\mbox{-}\rm{Pext})$. The $n$-$\rm{Pext}$-injective objects will be called $n$-pure injective objects.  The pair of ideals $(\Phi(n\mbox{-}\rm{Pext})$, $n$-$\rm{Pext}$-${\rm inj})$ is an $n$-orthogonal pair.  To see this, let $f$ be an $n$-pure phantom and $g$ be an $n$-pure injective morphism. Then  $n$-pullback of admissible $n$-exact sequence $\eta$ along $f$ is a pure admissible $n$-exact sequence and $n$-pushout of this sequence along $g$ is contractible. Hence $\Ext^n(f, g)=0$.
\end{example}

\begin{notation}
Let $\SI$ be an ideal of $\SC$. The collection of all admissible $n$-exact sequences that are obtained from $n$-pullbacks of admissible $n$-exact sequences along morphisms in $\SI$ is denoted by $\PB(\SI)$.
\end{notation}

\begin{proposition}\label{7}
Let $\SI$ be an ideal of $\SC$. Then $\PB(\SI)$ is an $n$-proper subclass of $\SX$.
\end{proposition}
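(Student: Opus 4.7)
The plan is to verify in turn each of the four defining properties of an $n$-proper class from \ref{proper class}: that $\PB(\SI)$ contains all split admissible $n$-exact sequences, is closed under isomorphisms of complexes, is closed under finite direct sums, and is closed under $n$-pullbacks and $n$-pushouts along any morphism. The workhorse throughout will be Proposition \ref{Ext}, which (read at the level of admissible $n$-exact sequences rather than Yoneda classes) yields the compatibility $f_* g^* \eta = g^* f_* \eta$.

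Closure under isomorphism is inherent to the definition of $\PB(\SI)$. For split sequences, I would observe that the zero endomorphism $0_A: A \lrt A$ lies in $\SI$ for every $A \in \SC$, since every ideal is in particular an additive subgroup containing zero. For a split admissible $n$-exact sequence $\delta$ ending in $A$, the $n$-pullback of $\delta$ itself along $0_A$, computed iteratively via Remark \ref{Rem-Construction}, is once more a split admissible $n$-exact sequence isomorphic to $\delta$ (in the split case each intermediate pullback square collapses, so no nontrivial right $\SC$-approximation is needed). Hence $\delta \in \PB(\SI)$. For closure under finite direct sums, note first that $\SI$ itself is closed under direct sums: the morphism $g_1 \oplus g_2$ factors as $\iota_1 g_1 \pi_1 + \iota_2 g_2 \pi_2$, a sum of compositions with the $g_i \in \SI$. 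If $\delta_i = g_i^* \eta_i$ with $g_i \in \SI$, then the $n$-pullback of $\eta_1 \oplus \eta_2$ along $g_1 \oplus g_2$ is isomorphic to $\delta_1 \oplus \delta_2$, placing the direct sum in $\PB(\SI)$.

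The core of the argument is closure under $n$-pullback and $n$-pushout along arbitrary morphisms. Given $\delta = g^* \eta$ with $g: Y \lrt X^{n+1}$ in $\SI$ and any morphism $h: Y' \lrt Y$, functoriality of the iterated $n$-pullback construction gives $h^* \delta = h^*(g^* \eta) = (gh)^* \eta$, and $gh \in \SI$ because $\SI$ absorbs composition on the right. For pushout along any morphism $f: A' \lrt B'$ on the first term of $\delta$, Proposition \ref{Ext} (applied to the explicit $n$-pushout and $n$-pullback diagrams in $\SC$) produces $f_* \delta = f_*(g^* \eta) = g^*(f_* \eta)$. Since $f_* \eta$ is itself admissible $n$-exact by axiom $(E2)$, the sequence $f_* \delta$ arises as an $n$-pullback of an admissible $n$-exact sequence along $g \in \SI$, so $f_* \delta \in \PB(\SI)$.

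The principal technical obstacle is ensuring the identity $f_* g^* \eta = g^* f_* \eta$ at the level of actual admissible $n$-exact sequences rather than merely on Yoneda classes, since $n$-pullbacks and $n$-pushouts in $\SC$ are constructed iteratively and involve right and left $\SC$-approximations at each intermediate step, as recalled in Remark \ref{Rem-Construction}. This forces one to argue square by square, invoking the completion of $n$-pushouts given in diagram \ref{completed pushout} and its $n$-pullback dual, so that the two ways of building the doubly-transformed sequence produce isomorphic complexes. Once this diagrammatic compatibility is in hand, the remainder of the verification reduces to routine ideal bookkeeping.
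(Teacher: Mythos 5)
Your proposal follows essentially the same route as the paper's own proof: both verify the four defining properties of an $n$-proper class from \ref{proper class} one by one, with the zero-morphism pullback handling contractible sequences, the diagonal matrix trick handling direct sums, right-absorption of $\SI$ handling closure under $n$-pullback, and Proposition~\ref{Ext} handling closure under $n$-pushout by rewriting $f_*(g^*\eta)$ as $g^*(f_*\eta)$. Your closing remark about needing the identity at the level of actual admissible $n$-exact sequences rather than Yoneda classes is a fair caveat, and it is precisely the content of the paper's proof of Proposition~\ref{Ext} (the diagrammatic argument via Jasso's Propositions 4.8 and 4.9) that discharges it.
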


\begin{proof}
By \ref{proper class}, we have to show that $\PB(\SI)$ contains contractible $n$-exact sequences, is closed under isomorphisms, closed under direct sums, $n$-pullbacks and $n$-pushouts. Obviously it is closed under isomorphisms. Let $\eta: \ A' \rt X^1\rt \cdots\rt X^n\rt A$ be  a contractible $n$-exact sequence. Then $n$-pullback of $\eta$ along morphism $0: A\rt A$ is just $\eta$, so $\eta\in \PB(\SI)(A, A')$. Now let $\eta$ and $\eta'$ be in $\PB(\SI)$ such that arise as $n$-pullbacks of admissible $n$-exact sequences $\gamma$ and $\gamma'$ along morphisms $i$ and $i'$, respectively. Then direct sum of $\eta$ and $\eta'$ is  the $n$-pullback of  $\gamma\oplus\gamma'$ along morphism
$\begin{bmatrix}
 i & 0\\  0 & i'
\end{bmatrix}$,
and so $\PB(\SI)$ is closed under direct sums. To see  that $\PB(\SI)$ is closed under $n$-pullbacks, let $\eta: \ A' \rt X^1\rt \cdots\rt X^n\rt A$ be an admissible $n$-exact sequence that is obtained from the $n$-pullback of $\gamma$ along morphism $i: A\rt A''$. The $n$-pullback of  $\eta$ along morphism $f: X\rt A$ is an $n$-pullback of $\gamma$ along $if$ which is in $\SI$. Finally we show that $\PB(\SI)$ is closed under $n$-pushouts. Let $\eta: \ A' \rt X^1\rt \cdots\rt X^n\rt A$ be the $n$-pullback of $\gamma: \ A' \rt Y^1\rt \cdots \rt Y^n\rt A''$ along $i:A\rt A''$. By Proposition \ref{Ext}, the $n$-pushout of $\eta$ along $g:A' \rt Z$ is
\[\Ext^n_{\SX}(A, g)(\eta)=\Ext^n_{\SX}( A, g)\Ext^n_{\SX}(i, B)(\gamma)=\Ext^n_{\SX}(i, Z)\Ext^n_{\SX}(A'', g)(\gamma),\]
which is the $n$-pullback of admissible $n$-exact sequence $\Ext^n_{\SX}(A'', g)(\gamma)$ along $i$.
\end{proof}

\begin{remark}\label{PB(I)}
The above proposition, in view of \ref{proper class}, implies that $\PB(\SI)$ induces an additive subfunctor of $\Ext^n_{\SX}$, we denote it by the same notation $\PB(\SI)$.
\end{remark}

\begin{lemma}\label{10}
 Let $\SI$ be an ideal of $\SC$.  Then $\SI^{\perp}=\SF\mbox{-}{\rm inj}$, where $\SF=\PB(\SI)$.
\end{lemma}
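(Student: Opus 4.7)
The plan is to prove the two inclusions $\SI^{\perp}\subseteq \SF\mbox{-}{\rm inj}$ and $\SF\mbox{-}{\rm inj}\subseteq \SI^{\perp}$ separately. Both directions are essentially unpacking the definitions and then applying the functoriality identity
\[
\Ext^n(i,g)\;=\;\Ext^n(B,g)\,\Ext^n(i,A')
\]
supplied by Proposition \ref{Ext}. The key observation is that the definition of $\SF=\PB(\SI)$ is \emph{precisely} what is needed to make this identity trade an element of $\SI^\perp$ (a statement about morphisms in $\SI$) for an element of $\SF\mbox{-}{\rm inj}$ (a statement about $\SF$-admissible $n$-exact sequences).

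For the inclusion $\SI^{\perp}\subseteq \SF\mbox{-}{\rm inj}$: fix $g:A'\lrt Y$ in $\SI^\perp$ and an arbitrary $\eta\in\SF(B,A')$. By the very definition of $\SF=\PB(\SI)$, there exist $i:B\lrt A''$ in $\SI$ and an admissible $n$-exact sequence $\gamma\in\Ext^n(A'',A')$ with $\eta=\Ext^n(i,A')(\gamma)$. Apply Proposition \ref{Ext} to compute
\[
\Ext^n(B,g)(\eta)\;=\;\Ext^n(B,g)\,\Ext^n(i,A')(\gamma)\;=\;\Ext^n(i,g)(\gamma)\;=\;0,
\]
where the last equality uses $g\in\SI^{\perp}$ and $i\in\SI$. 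Thus $\SF(B,g)=0$ for every $B\in\SC$, so $g$ is $\SF$-injective.

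For the reverse inclusion $\SF\mbox{-}{\rm inj}\subseteq \SI^{\perp}$: fix an $\SF$-injective morphism $g:A'\lrt Y$, a morphism $i:B\lrt A''$ in $\SI$, and an arbitrary $\gamma\in\Ext^n(A'',A')$. By the definition of $\PB(\SI)$, the element $\Ext^n(i,A')(\gamma)$ lies in $\SF(B,A')$. Applying Proposition \ref{Ext} once more,
\[
\Ext^n(i,g)(\gamma)\;=\;\Ext^n(B,g)\,\Ext^n(i,A')(\gamma)\;=\;0,
\]
the vanishing coming from $\SF(B,g)=0$. Since $\gamma$ was arbitrary this gives $\Ext^n(i,g)=0$, and since $i\in\SI$ was arbitrary, $g\in\SI^\perp$.

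There is no real obstacle here: the whole proof rests on the two symmetric factorizations of $\Ext^n(i,g)$ from Proposition \ref{Ext}, and the definition of $\PB(\SI)$ has been arranged precisely to make them interchange. The only point worth noting is that it is legitimate to test $\SF$-injectivity by the values $\Ext^n(B,g)(\eta)$ for $\eta\in\SF(B,A')$, because $\SF$ is a subfunctor of $\Ext^n_{\SX}$ (Remark \ref{PB(I)}) and so $\SF(B,g)$ is just the restriction of $\Ext^n(B,g)$.
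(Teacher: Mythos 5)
Your proof is correct, and it is the same argument as the paper's, merely written out in full. The paper compresses both inclusions into a single chain of ``if and only if'' statements; you unpack them and verify each direction explicitly via the two symmetric factorizations of $\Ext^n(i,g)$ from Proposition~\ref{Ext}, together with the defining property of $\PB(\SI)$ as the class of $n$-pullbacks along morphisms of $\SI$.
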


\begin{proof}
This follows from the fact that a morphism $j$ is $\PB(\SI)$-injective if and only if for every $i\in\SI$, $\Ext^n(i, j)=0$ and this holds true if and only if $j \in \SI^{\perp}$.
\end{proof}

\begin{proposition}\label{14}
Let $\SI$ be a special precovering ideal. Then $\SI$ is the ideal of $\PB(\SI)$-phantom morphisms, that is $\SI=\Phi(\PB(\SI))$.
\end{proposition}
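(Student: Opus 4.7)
The plan is to prove the two inclusions $\SI \subseteq \Phi(\PB(\SI))$ and $\Phi(\PB(\SI)) \subseteq \SI$ separately. The first inclusion is essentially a tautology: if $i \in \SI$, then for any admissible $n$-exact sequence $\eta$ starting at $A$ (the codomain of $i$), the $n$-pullback $i^{*}(\eta)$ belongs to $\PB(\SI)$ literally by the definition of that class. Hence $i$ is a $\PB(\SI)$-phantom morphism.

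For the reverse inclusion I would proceed in two stages. First, I would establish the intermediate containment $\Phi(\PB(\SI)) \subseteq {}^{\perp}(\SI^{\perp})$. Fix $\varphi : X \rt A$ in $\Phi(\PB(\SI))$ and $g : A' \rt Y$ in $\SI^{\perp}$. Given an arbitrary $\eta \in \Ext^n(A, A')$, Proposition \ref{Ext} writes $\Ext^n(\varphi, g)(\eta) = g_{*}\varphi^{*}(\eta)$. The phantom hypothesis guarantees $\varphi^{*}(\eta) \in \PB(\SI)(X, A')$, so by the definition of $\PB(\SI)$ we may realize $\varphi^{*}(\eta) = k^{*}(\tau)$ for some $k : X \rt Z$ in $\SI$ and some $\tau \in \Ext^n(Z, A')$. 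A second application of Proposition \ref{Ext} yields $g_{*}k^{*}(\tau) = \Ext^n(k, g)(\tau)$, which vanishes because $k \in \SI$ and $g \in \SI^{\perp}$. This shows $\Ext^n(\varphi, g) = 0$ and thus $\varphi \in {}^{\perp}(\SI^{\perp})$.

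Second, since $\SI$ is special precovering, Theorem \ref{13} tells us that $(\SI, \SI^{\perp})$ is an $n$-ideal cotorsion pair, and in particular $\SI = {}^{\perp}(\SI^{\perp})$. Combining this equality with the first stage gives $\Phi(\PB(\SI)) \subseteq \SI$, finishing the argument.

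The only genuinely nontrivial move is in the first stage, where the two appearances of Proposition \ref{Ext} have to be arranged so that the pullback-then-pushout computation lands inside $\Ext^n(k, g)$ with $k \in \SI$ and $g \in \SI^{\perp}$; this is the point at which the phantom condition is converted into orthogonality. Everything else is bookkeeping with results already at our disposal, namely the bifunctoriality expressed in Proposition \ref{Ext} and the cotorsion-pair conclusion of Theorem \ref{13}.
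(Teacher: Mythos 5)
Your argument is correct and follows the same strategy as the paper: establish $\Phi(\PB(\SI)) \subseteq {}^{\perp}(\SI^{\perp})$ and then invoke Theorem \ref{13} (which gives $\SI = {}^{\perp}(\SI^{\perp})$ because $\SI$ is special precovering) to close the loop. The only cosmetic difference is that you unroll the content of Lemma \ref{10} (identifying $\SI^{\perp}$ with the $\PB(\SI)$-injective morphisms) directly into the $\Ext^n$ computation via Proposition \ref{Ext}, whereas the paper cites that lemma by name; the underlying orthogonality verification is the same.
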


\begin{proof}
It is clear that every morphism in $\SI$ is an $n$-$\PB(\SI)$-phantom morphism. Now  suppose that $i$ is an $n$-$\rm{PB}(\SI)$-phantom morphism, so every $n$-pullback of an admissible $n$-exact sequence along $i$ is a $\PB(\SI)$-admissible $n$-exact sequence.  If $j\in\SI^{\perp}$, then by Lemma \ref{10}, $j$ is $\PB(\SI)$-injective, so $\Ext^n(i, j)=0$ and $i\in {}^{\perp}(\SI^\perp)$.  Since by Theorem \ref{13}, $\SI={}^{\perp}(\SI^\perp)$, we conclude that $i \in \SI$.
\end{proof}

\begin{corollary}
Let $\SI$ be a special precovering ideal of $\SC$. Set $\SF=\PB(\SI)$. Then $(\Phi(\SF), \SF\mbox{-}{\rm inj})$ is an $n$-ideal cotorsion pair.
\end{corollary}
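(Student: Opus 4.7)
The plan is to observe that this corollary is essentially a direct combination of the three preceding results: Proposition~\ref{14}, Lemma~\ref{10}, and Theorem~\ref{13}. Concretely, by Proposition~\ref{14} the assumption that $\SI$ is special precovering gives $\SI = \Phi(\PB(\SI)) = \Phi(\SF)$, so the left-hand ideal of the candidate cotorsion pair coincides with $\SI$ itself. Likewise Lemma~\ref{10} yields $\SI^{\perp} = \SF\mbox{-}{\rm inj}$, so the right-hand ideal coincides with $\SI^{\perp}$. Thus the pair $(\Phi(\SF), \SF\mbox{-}{\rm inj})$ is literally the pair $(\SI, \SI^{\perp})$.

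The remaining step is to invoke Theorem~\ref{13}, which states exactly that when $\SI$ is a special precovering ideal, the $n$-orthogonal pair $(\SI, \SI^{\perp})$ is an $n$-ideal cotorsion pair; that is, the identities $\SI = {}^{\perp}(\SI^{\perp})$ and $\SI^{\perp} = \SI^{\perp}$ both hold. Substituting back via the two identifications above converts this into the assertion $\Phi(\SF) = {}^{\perp}(\SF\mbox{-}{\rm inj})$ and $\SF\mbox{-}{\rm inj} = \Phi(\SF)^{\perp}$, which is precisely what it means for $(\Phi(\SF), \SF\mbox{-}{\rm inj})$ to be an $n$-ideal cotorsion pair.

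There is really no hard step here; the content is bookkeeping. The only point worth writing out carefully is that the $n$-orthogonality of $(\Phi(\SF), \SF\mbox{-}{\rm inj})$ as a pair of ideals follows tautologically from the definition of $\Phi(\SF)$: for $\varphi \in \Phi(\SF)$ and $j \in \SF\mbox{-}{\rm inj}$, the map $\Ext^n(\varphi, -)$ has image in $\SF(-,-)$, and then postcomposing with $j$ kills it because $\SF(-, j) = 0$. Thus the proposal is: substitute using Proposition~\ref{14} and Lemma~\ref{10}, cite Theorem~\ref{13}, and we are done in two lines.
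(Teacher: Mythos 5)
Your proposal is correct and matches the paper's own proof exactly: cite Proposition~\ref{14} to get $\SI = \Phi(\SF)$, Lemma~\ref{10} to get $\SI^{\perp} = \SF\mbox{-}{\rm inj}$, and then invoke Theorem~\ref{13}. The extra remark about orthogonality is harmless but already subsumed in Theorem~\ref{13}'s conclusion.
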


\begin{proof}
By Proposition \ref{14}, $\SI=\Phi(\SF)$. Also by Lemma \ref{10}, $\SI^{\perp}=\SF\mbox{-}{\rm inj}$. Now the result follows by Theorem \ref{13}.
\end{proof}

Following lemma is a special case of the Obscure axiom \cite[ Proposition 4.11]{Ja}.

\begin{lemma}\label{5}
Let $i_0: A' \lrt Y^1$ be an $\SF$-admissible monomorphism that factors through an $\SX$-admissible monomorphism $i: A' \lrt X^1$
\begin{equation*}
\begin{tikzcd}
A' \rar{i}\drar{i_0}& X^1 \dar{g}\\
&  Y^1
\end{tikzcd}
\end{equation*}
Then $i$ is an $\SF$-admissible monomorphism.
\end{lemma}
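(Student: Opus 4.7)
The plan is to realize the $\SX$-admissible $n$-exact sequence starting with $i$ as an $n$-pullback of an $\SF$-admissible $n$-exact sequence starting with $i_0$, and then invoke closure of the $n$-proper class $\SF$ under $n$-pullbacks. Concretely, I would first extend $i$ to a full $\SX$-admissible $n$-exact sequence
\[\eta:\ A' \stackrel{i}{\rightarrowtail} X^1 \lrt X^2 \lrt \cdots \lrt X^n \twoheadrightarrow B,\]
and $i_0$ to a full $\SF$-admissible $n$-exact sequence
\[\eta_0:\ A' \stackrel{i_0}{\rightarrowtail} Y^1 \lrt Y^2 \lrt \cdots \lrt Y^n \twoheadrightarrow B_0.\]
The hypothesis $i_0=gi$, together with the identity on $A'$, already supplies the first two components of a candidate chain map $\eta\to\eta_0$ and makes the first square commute.

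Next I would lift $(1_{A'},g)$ to a full chain map $g^{\bu}:\eta\to\eta_0$ by iterating the $n$-cokernel property of $\eta$. At position $k\ge 1$, the commutativity already built combined with $n$-exactness of $\eta_0$ forces $(d^{k}_{\eta_0}\circ g^{k})\circ d^{k-1}_{\eta}=d^{k}_{\eta_0}\circ d^{k-1}_{\eta_0}\circ g^{k-1}=0$. Exactness of the $n$-cokernel sequence of $\eta$ then allows one to factor $d^{k}_{\eta_0}\circ g^{k}$ through $d^{k}_{\eta}$, producing $g^{k+1}$; iterating up to $k=n$ delivers the rightmost vertical $g^{n+1}:B\to B_0$.

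The decisive step is to conclude that this diagram is in fact an $n$-pullback diagram, i.e.\ that $\eta$ is the $n$-pullback of $\eta_0$ along $g^{n+1}$. I would appeal to the dual of the implication (iv)$\Rightarrow$(i) of \cite[Proposition 4.8]{Ja}, used already in the proof of Proposition \ref{Ext}: a morphism between two admissible $n$-exact sequences whose leftmost vertical is the identity is automatically an $n$-pullback diagram. Once this is established, since $\eta_0\in\SF$ and the $n$-proper class $\SF$ is closed under $n$-pullbacks along arbitrary morphisms in $\SA$ (see \ref{proper class}), we get $\eta\in\SF$, so $i$ is the leftmost arrow of an $\SF$-admissible $n$-exact sequence, i.e.\ an $\SF$-admissible monomorphism. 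I expect the pullback verification to be the main technical obstacle; if the cited characterization cannot be applied directly, it can be carried out by a diagram chase using the explicit construction of $n$-pullbacks in Remark \ref{Rem-Construction}.
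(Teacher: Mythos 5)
Your proposal is correct and follows essentially the same route as the paper: extend $i$ and $i_0$ to admissible $n$-exact sequences, use the weak cokernel (factorization) property of the sequence starting with $i$ to complete $(1_{A'}, g)$ to a chain map, invoke the dual of (iv)$\Rightarrow$(i) of Proposition 4.8 in Jasso to recognize the resulting diagram as an $n$-pullback, and conclude by closure of the $n$-proper class $\SF$ under $n$-pullbacks. The only difference is presentational: you spell out the inductive lifting step in detail, whereas the paper states it as an immediate consequence of the weak cokernel property.
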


\begin{proof}
Since $i$ is an $\SX$-admissible monomorphism, there exists an $\SX$-admissible $n$-exact sequence
\[ A' \st{i}{\rightarrowtail} X^1\lrt X^2 \lrt \cdots\lrt X^n \twoheadrightarrow A.\]
Similarly,  for $i_0$ we have $\SF$-admissible $n$-exact sequence
\[ A' \st{i_0}{\rightarrowtail} Y^1 \lrt Y^2 \lrt \cdots\lrt Y^n \twoheadrightarrow A.\]
Using the morphism $g: X^1 \rt Y^1$ and factorization property of the weak cokernels we can construct the commutative diagram
\begin{equation*}
\begin{tikzcd}
\eta: & A' \dar[equals] \rar[tail]{i} & X^1 \rar\dar{g}& X^2\rar\dar[dotted] &\cdots\rar & X^n\rar[two heads] \dar[dotted] & A\dar[dotted]{h}\\
\delta: & A' \rar[tail]{i_0} & Y^1\rar & Y^2\rar &\cdots\rar & Y^n \rar[two heads] & A''
\end{tikzcd}
\end{equation*}
By the dual of $(iv) \Rightarrow (i)$ of Proposition 4.8 of \cite{Ja}, this diagram is an $n$-pullback diagram. Now since $\delta$ is an $\SF$-admissible $n$-exact sequence and $\SF$ is an $n$-proper class, $\eta$ also should be $\SF$-admissible $n$-exact sequence and so $i$ is an $\SF$-admissible monomorphism.
\end{proof}

We say that an additive subfunctor $\SF\subseteq \Ext^n$ has enough injective morphisms if for every object $A' \in\SC$ there exists an $\SF$-admissible $n$-exact sequence
\[\eta: \ \ A' \stackrel{e}\rt X^1 \rt \cdots \rt X^n\rt A\]
where $e: A' \rt X^1$ is an $\SF$-injective morphism. The subfunctor $\SF\subseteq\Ext^n$ has enough special injective morphism if for every object $A' \in\SC$, there exists an $\SF$-admissible sequence $\eta$ as above that obtains from the $n$-pullback of an admissible $n$-exact sequence along an $\SF$-phantom morphism.

\begin{proposition}\label{15}
Let $\SF\subseteq \Ext^n$ be an additive subfunctor admitting enough injective morphisms. Then $\Phi(\SF)= {}^{\perp}(\SF\mbox{-}{\rm inj})$.
\end{proposition}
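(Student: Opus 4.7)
The plan is to prove the two inclusions separately, with the nontrivial direction relying on the hypothesis that $\SF$ has enough injective morphisms, together with Lemma \ref{5}.

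The inclusion $\Phi(\SF)\subseteq {}^{\perp}(\SF\mbox{-}{\rm inj})$ is immediate from the definitions. Given $\varphi : X\lrt A$ in $\Phi(\SF)$ and $j : B\lrt Y$ in $\SF\mbox{-}{\rm inj}$, for any $\eta \in \Ext^n(A, B)$ the pullback $\varphi^{*}(\eta)$ lies in $\SF(X, B)$ because $\varphi$ is $n$-$\SF$-phantom, and then the pushout along $j$ vanishes because $j$ is $\SF$-injective. Using Proposition \ref{Ext} to rewrite $\Ext^n(\varphi, j)(\eta) = \Ext^n(X, j)\Ext^n(\varphi, B)(\eta)$, we conclude $\Ext^n(\varphi, j) = 0$.

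For the reverse inclusion, suppose $\varphi : X\lrt A$ belongs to ${}^{\perp}(\SF\mbox{-}{\rm inj})$ and let $\eta : A' \rightarrowtail X^1 \rt \cdots \rt X^n \twoheadrightarrow A$ be any $\SX$-admissible $n$-exact sequence. Denote by $\eta_{\varphi} : A' \stackrel{i_0}{\rightarrowtail} Z^1 \rt \cdots \rt Z^n \twoheadrightarrow X$ the $n$-pullback of $\eta$ along $\varphi$; we must show that $\eta_{\varphi}$ is $\SF$-admissible. Using the assumption that $\SF$ has enough injective morphisms, choose an $\SF$-admissible $n$-exact sequence
\[ \delta : A' \stackrel{e}{\rightarrowtail} Y^1 \rt \cdots \rt Y^n \twoheadrightarrow B \]
with $e$ an $\SF$-injective morphism. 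Since $\varphi \in {}^{\perp}(\SF\mbox{-}{\rm inj})$, we have $\Ext^n(\varphi, e) = 0$, and therefore the $n$-pushout $e_{*}\varphi^{*}(\eta) = \Ext^n(\varphi, e)(\eta)$ of $\eta_{\varphi}$ along $e$ is contractible.

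The contractibility yields the key factorization. Writing the $n$-pushout square
\[
\begin{tikzcd}
A' \rar{i_0} \dar{e} & Z^1 \dar{g} \\
Y^1 \rar{j_0} & W^1
\end{tikzcd}
\]
the fact that the bottom $n$-exact sequence is split means $j_0$ admits a retraction $r : W^1 \lrt Y^1$, and then $h := rg : Z^1 \lrt Y^1$ satisfies $h i_0 = r j_0 e = e$. Thus the $\SF$-admissible monomorphism $e$ factors through the $\SX$-admissible monomorphism $i_0$, and Lemma \ref{5} forces $i_0$ itself to be an $\SF$-admissible monomorphism. Since the $n$-cokernel of $i_0$ is unique up to isomorphism and $\SF$ is closed under isomorphism (as an $n$-proper class), $\eta_{\varphi}$ is $\SF$-admissible. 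As $\eta$ was arbitrary, $\varphi \in \Phi(\SF)$.

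The main obstacle is the second inclusion, specifically producing the factorization of $e$ through $i_0$ from the contractibility of the composite pushout--pullback, and then turning the $\SF$-admissibility of a \emph{single} map $i_0$ into the $\SF$-admissibility of the whole sequence $\eta_{\varphi}$. Both are handled by splitness of $j_0$ together with Lemma \ref{5} and the uniqueness of $n$-cokernels in $(\SC,\SX)$.
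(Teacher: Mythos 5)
Your proof is correct and follows essentially the same route as the paper's: both directions are handled as you do, with the nontrivial inclusion obtained by pulling $\eta$ back along $\varphi$, observing that the pushout along the $\SF$-injective $e$ is contractible because $\Ext^n(\varphi,e)=0$, extracting a factorization of $e$ through the pullback's admissible monomorphism from the splitting, and invoking Lemma~\ref{5}. The only difference is one of exposition: you spell out the retraction $r$ and the verification $r g\, i_0 = r j_0 e = e$ explicitly, whereas the paper compresses this step into "$e$ factors as $e=gi$," and the paper packages the pullback/pushout compatibility via Proposition 4.9 of Jasso; neither changes the substance.
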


\begin{proof}
First note that the pair of ideals $(\Phi(\SF), \SF\mbox{-}{\rm inj})$  is an $n$-orthogonal pair. To see this, let $f$ be an $n$-$\SF$-phantom and $g$ be an $\SF$-injective morphism. Then  $n$-pullback of admissible $n$-exact sequence $\eta$ along $f$ is an $\SF$-admissible $n$-exact sequence and $n$-pushout of this sequence along $g$ is trivial. Hence $\Ext^n(f, g)=0$. This, in particular, implies that $\Phi(\SF)\subseteq {}^{\perp}(\SF\mbox{-}\rm inj)$. Now we show that the converse inclusion ${}^{\perp}(\SF\mbox{-}\rm inj)\subseteq\Phi(\SF)$ also holds true. Assume that $f:X \rt A \in {}^{\perp}(\SF\mbox{-}\rm inj)$. We show that $f$ is an $n$-$\SF$-phantom morphism, that is, the $n$-pullback of any admissible $n$-exact sequence
\[\eta: \ \ A' \rightarrowtail X^1 \rt \cdots \rt X^n \twoheadrightarrow A\]
along $f$ is an $\SF$-admissible sequence.

Since $\SF$ has enough injective morphisms, there exists $\SF$-injective $\SF$-admissible monomorphism $e: A' \rt Y$.  By Proposition 4.9 of \cite{Ja}, we get the following diagram
{\footnotesize{
\begin{equation*}
\begin{tikzcd}[column sep=small]
& \eta':&  A'\dlar{e}\ar[tail]{rr}{i}\ar[equals]{dd} &&U^1\dlar\ar{rr}\ar{dd} &&\cdots\dlar[dotted]\ar{rr}\ar[dotted]{dd}&&U^n\ar{dd} \dlar
\ar[two heads]{rr}&&X\dlar[equals]\ar{dd}[near start]{f}\\
\gamma: &     Y\ar[tail]{rr}\ar[equals]{dd} &&W^1\ar{rr}\ar{dd}&&\cdots\ar{rr}\ar[dotted]{dd}&& W^n\ar[two heads]{rr}\ar{dd}&&X\ar{dd}[near start]{f}&& \\
&\eta: \ \ \ \ \ &A'\dlar{e}\ar[tail]{rr}&&X^1\ar{rr}\dlar&&\cdots\dlar[dotted]\ar{rr}&&X^n\dlar\ar[two heads]{rr}&&A\dlar[equals]\\
& Y\ar[tail]{rr}&&V^1\ar{rr}&&\cdots\ar{rr}&&V^n\ar[two heads]{rr}&&A
\end{tikzcd}
\end{equation*}}}

\noindent The admissible sequence $\gamma$ is contractible, since $\Ext^n(f, e)=0$. So the  $\SF$-admissible monomorphism $e$ factors as $e= gi$ for some $g: U^1\rt Y$. By Lemma \ref{5}, the  morphism $i$ is an $\SF$-admissible monomorphism. Thus $\eta'$, $n$-pullback  of  admissible $n$-exact sequence $\eta$ along $f$, is an $\SF$-admissible sequence.
\end{proof}

\begin{corollary}\label{implication 3 to 4}
Let $\SI$ be  an ideal of $\SC$ such that $n$-ideal cotorsion pair $(\SI, \SI^{\perp})$  is complete. Set $\SF=\PB(\SI)$. Then $\SF \subseteq \Ext^n$ is an additive subfunctor admitting enough special injective morphisms. 
\end{corollary}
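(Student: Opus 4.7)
The plan is to unpack the completeness hypothesis to obtain a special $\SI^{\perp}$-preenvelope of an arbitrary object, and then reinterpret the data produced by this preenvelope in the language of the functor $\SF=\PB(\SI)$ using the two key identifications $\SI=\Phi(\SF)$ and $\SI^{\perp}=\SF\mbox{-}{\rm inj}$ that are already established earlier in the section.

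Concretely, I would fix $A' \in \SC$ and apply the dual of Definition \ref{Def-Special Precover} to produce a special $\SI^{\perp}$-preenvelope of $A'$. By Definition \ref{12}, completeness of $(\SI,\SI^{\perp})$ gives an admissible $n$-exact sequence
\[\eta': \ A' \stackrel{e}{\rightarrowtail} X^1 \to \cdots \to X^n \twoheadrightarrow A\]
with $e \in \SI^{\perp}$, and which arises as the $n$-pullback of some admissible $n$-exact sequence $\eta$ along a morphism $i \in {}^{\perp}(\SI^{\perp})$.

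Now I would translate each ingredient into $\SF$-language. Since $(\SI,\SI^{\perp})$ is an $n$-ideal cotorsion pair we have ${}^{\perp}(\SI^{\perp})=\SI$; moreover completeness forces $\SI$ to be a special precovering ideal, so Proposition \ref{14} applies and identifies $\SI = \Phi(\PB(\SI)) = \Phi(\SF)$. Hence $i \in \Phi(\SF)$, i.e.\ $i$ is an $n$-$\SF$-phantom morphism. On the other side, Lemma \ref{10} gives $\SI^{\perp} = \SF\mbox{-}{\rm inj}$, so $e$ is $\SF$-injective. Finally, because $\eta'$ is by construction an $n$-pullback of an admissible $n$-exact sequence along a morphism in $\SI$, it lies in $\PB(\SI)=\SF$ and is therefore an $\SF$-admissible $n$-exact sequence.

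Putting these together, every $A' \in \SC$ fits into an $\SF$-admissible $n$-exact sequence whose leftmost morphism is $\SF$-injective and which is obtained from the $n$-pullback of an admissible $n$-exact sequence along an $n$-$\SF$-phantom morphism, which is exactly the definition preceding Proposition \ref{15} of $\SF$ having enough special injective morphisms. There is no substantive obstacle here; the argument is essentially bookkeeping, and the only point requiring care is checking that completeness of the cotorsion pair indeed unlocks Proposition \ref{14} (via $\SI$ being special precovering) and that the $n$-pullback diagram produced by the dual of Definition \ref{Def-Special Precover} is the same object as the element of $\PB(\SI)$ required by the target definition.
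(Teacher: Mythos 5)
Your proposal matches the paper's proof essentially step for step: both use completeness to extract a special $\SI^{\perp}$-preenvelope, Lemma \ref{10} to identify $\SI^{\perp}=\SF\mbox{-}{\rm inj}$, and Proposition \ref{14} (via completeness forcing $\SI$ to be special precovering) to identify $\SI=\Phi(\SF)$ and so recognize the defining pullback morphism $i$ as an $n$-$\SF$-phantom. The bookkeeping is correct and the approach is the same as the paper's.
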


\begin{proof}
By \ref{proper class} we know that $\SF$ is an additive subfunctor of $\Ext^n$. We show that it has enough special injective morphisms. By Lemma \ref{10} we know that $\SI^{\perp}=\SF\mbox{-}\rm {inj}$. Since the $n$-ideal cotorsion pair $(\SI, \SI^{\perp})$ is complete, $\SF\mbox{-}\rm {inj}$ is special preenveloping ideal. So every $A'\in \SC$ has a special
$\SF\mbox{-}\rm {inj}$-preenvelope $A'\st{j}\rt X^1$ such that  it obtained as the leftmost morphism in an $n$-pullback of an admissible $n$-exact sequence $\eta$ along a morphism $i: A {\lrt} Y \in {}^\perp (\SF\mbox{-}\rm {inj})$. It is depicted by the following diagram.
\begin{equation*}
\begin{tikzcd}
\eta': & A'\rar[tail]{j}\dar[equals] & X^1\rar\dar &\cdots\rar & X^n\dar\rar[two heads] & A\dar{i}\\
\eta: & A'\rar[tail] & Z^1\rar& \cdots \rar& Z^n\rar[two heads]& Y\\
\end{tikzcd}
\end{equation*}
 Also since the $n$-ideal cotorsion pair $(\SI, \SI^{\perp})$ is complete, $\SI$ is special precovering. So by Proposition \ref{14} we get $\SI=\Phi(\SF)$. On the other hand $\SI={}^{\perp}(\SF\mbox{-}\rm {inj})$ implies that $i\in\Phi(\SF)$. Hence we get admissible $n$-exact sequence $\eta'$ with leftmost morphism $j\in\SF\mbox{-}\rm {inj}$, obtained from the  $n$-pullback of an admissible $n$-exact sequence $\eta$ along an $\SF$-phantom morphism. Therefore  $\SF$ has enough special injective morphisms.
\end{proof}

\section{Salce's Lemma}\label{Higher Salce}
Salce's Lemma \cite{S} is one of the main theorems in the classical approximation theory. It relates the notions of (special) precoverings, (special) preenvelopings and cotorsion pairs. By introducing an interesting exact structure on the morphism category of an exact category, called ME-exact structure, an ideal version of Salce's Lemma is proved in \cite[Theorem 6.3]{FH}. Our aim in this section is to provide a higher ideal version of this result.

\begin{setup}
Throughout the section, $n \geq 1$ is a fixed integer and $\SC$ is an $n$-cluster tilting subcategory of the exact category $(\SA, \SE)$ with the $n$-exact structure $\SX$. Since $(\SC, \SX)$ is fixed, for simplicity, we just write $\Ext^n$ instead of $\Ext^n_{\SX}$.
\end{setup}

The notions of injective and projective objects in an $n$-exact category are defined by using the exactness of the Hom functor. For example, an object $E \in \SA$ is called $\SX$-injective if, for every admissible monomorphism $f: X^0 \rt X^1$ in $\SX$, the induced morphism $\SC(X^1, E) \lrt \SC(X^0, E)$ is an epimorphism.

\begin{definition}(\cite[Definition 5.3]{Ja})
We say that an $n$-exact category $(\SC, \SX)$ has enough $\SX$-injectives if for every object $X\in \SC$ there exists injective objects $E^1, \cdots,  E^n$ and an admissible $n$-exact sequence
\[X\rightarrowtail E^1\rightarrow\cdots\rightarrow E^n\twoheadrightarrow X'.\]
By abuse of notation, we denote $X'$ by $\Omega^{-n}X$ and will call it the $n$th $\SX$-cosyzygy of $X$. The notion of having enough $\SX$-projectives is defined dually.
The $n$th $\SX$-syzygy of $X$ is denoted by $\Omega^nX$.
\end{definition}

\begin{definition}
Let $(\SC, \SX)$ be an $n$-cluster tilting subcategory of $\SA$ and
\begin{equation*}
\begin{tikzcd}
X^0 \dar{f^0} \rar[tail] & X^1 \rar \dar{f^1} & X^2 \rar\dar{f^2} & \cdots \rar & X^n \rar[two heads] \dar{f^n} &  X^{n+1} \dar{f^{n+1}}\\
Y^0 \rar[tail] & Y^1 \rar & Y^2 \rar & \cdots \rar & Y^n \rar[two heads]  & Y^{n+1}
\end{tikzcd}
\end{equation*}
be a morphism of admissible $n$-exact sequences in $\SX$. Let $\SI$ be an ideal of $\SC$. We say that $\SI$ is closed under $n$-extensions by $\SX$-injective objects, if whenever $f^0 \in \SI$ and $X^{n+1}$ is an $\SX$-injective object, then we can deduce that all the middle morphisms $f^i$, for $i\in \{1, 2, \ldots, n\},$ are in $\SI$. Dually one can define the notion of an ideal closed under $n$-coextensions by $\SX$-projective objects.
\end{definition}
Note that if $n=1$, and hence $\SC=\SA$, the ideal $\SM^{\perp}$, where $\SM$ is a collection of morphisms in $\SA$, is always closed under ($1$-)extensions by injective objects
\cite[Proposition 9]{FGHT}.

\begin{construction}\label{Rem-Complete}
Consider the following commutative diagram
{\footnotesize{
\begin{equation*}
\begin{tikzcd}[column sep=small]
&&&&&&&&&&&&  U^{n+1} \dlar \ar{dd}[near start]{\varphi} \\
&&&&&&&&&&& V^{n+1} \ar{dd}[near start]{\psi}\\
& \eta: & X^0 \dlar[near start]{f^0} \ar[tail]{rr} && X^1  \dlar \ar{rr} && X^2 \ar{rr} \dlar && \cdots \ar{rr}\dlar[dotted] && X^n \dlar \ar[two heads]{rr} && X^{n+1} \dlar \\
\delta: & Y^0 \ar[tail]{rr} && Y^1 \ar{rr} && Y^2 \ar{rr} && \cdots \ar{rr} && Y^n \ar[two heads]{rr} && Y^{n+1}
\end{tikzcd} \end{equation*}}}

\noindent in which rows are admissible $n$-exact sequences. By taking $n$-pullback diagrams of $\eta$ along $\varphi$ and of $\delta$ along $\psi$ and then using the properties of $n$-pullbacks in $n$-exact categories, the above diagram can be completed as follows

{\footnotesize{
\begin{equation*}
\begin{tikzcd}[column sep=small]
&&X^0 \dlar{f^0} \ar[tail]{rr}\ar[equals]{dd} && U^1\dlar\ar{rr}\ar{dd} && U^2 \ar{rr}\ar{dd} \dlar && \cdots \ar{rr}\ar[dotted]{dd}\dlar[dotted] && U^n \dlar \ar[two heads]{rr} \ar{dd} && U^{n+1} \dlar \ar{dd}[near start]{\varphi} \\
& Y^0 \ar[tail]{rr} \ar[equals]{dd} && V^1\ar{rr}\ar{dd}&& V^2\ar{rr}\ar{dd} && \cdots \ar{rr}\ar[dotted]{dd} && V^n \ar[two heads]{rr}\ar{dd} && V^{n+1} \ar{dd}[near start]{\psi} \\
&& X^0 \dlar{f^0} \ar[tail]{rr} && X^1  \dlar \ar{rr} && X^2 \ar{rr} \dlar && \cdots \ar{rr}\dlar[dotted] && X^n \dlar \ar[two heads]{rr} && X^{n+1} \dlar \\
& Y^0 \ar[tail]{rr} && Y^1 \ar{rr} && Y^2 \ar{rr} && \cdots \ar{rr} && Y^n \ar[two heads]{rr} && Y^{n+1}
\end{tikzcd}
\end{equation*}}}

\noindent where all squares are commutative and rows are admissible $n$-exact sequences. To construct morphisms $U^i \lrt V^i$, one should apply the construction of $n$-pullbacks, explained in Remark \ref{Rem-Construction}.
\end{construction}

Now we are in a position to state and prove the main result of this section. For a proof in the case $n=1$, see \cite[Theorem 6.3]{FH}.

\begin{theorem}(Salce's Lemma)\label{18}
Let $(\SC, \SX)$ be an $n$-cluster tilting subcategory of an exact category $(\SA, \SE)$. Let $(\SI, \SJ)$ be an $n$-ideal cotorsion pair such that $\SI$ is closed under $n$-coextensions by $\SX$-projective objects and $\SJ$ is closed under $n$-extensions by $\SX$-injective objects. If $\SC$ has enough $\SX$-injective objects, then $\SI$ is a special precovering ideal if and only if $\SJ$ is a special preenveloping ideal.
\end{theorem}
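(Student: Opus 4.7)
The plan is to adapt the ideal version of Salce's lemma \cite[Theorem 6.3]{FH} to the $n$-exact setting, using Proposition \ref{Ext} and the special-precover construction of Definition \ref{Def-Special Precover} as the main tools. I will sketch the direction \emph{$\SI$ special precovering implies $\SJ$ special preenveloping}; the converse will be handled by the dual construction, in which the closure of $\SI$ under $n$-coextensions by $\SX$-projective objects plays the symmetric role to what the $\SX$-injective coresolutions play below.

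Fix $A' \in \SC$. Using enough $\SX$-injectives, produce an admissible $n$-exact sequence
\[ A' \stackrel{e^{0}}{\rightarrowtail} E^{1} \lrt \cdots \lrt E^{n} \stackrel{e^{n}}{\twoheadrightarrow} E^{n+1} \]
with $E^{1},\ldots,E^{n}$ each $\SX$-injective; denote its Yoneda class by $\xi \in \Ext^{n}(E^{n+1}, A')$. Let $\varphi : X^{n+1} \to E^{n+1}$ be a special $\SI$-precover of $E^{n+1}$, so $\varphi \in \SI = {}^{\perp}\SJ$ (by Theorem \ref{13}) and its defining $n$-pushout involves a morphism in $\SI^{\perp} = \SJ$. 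Form the $n$-pullback of the $\xi$-sequence along $\varphi$:
\begin{equation*}
\begin{tikzcd}
A' \dar[equals] \rar[tail]{d^{0}} & U^{1} \rar \dar & \cdots \rar & U^{n} \rar[two heads] \dar & X^{n+1} \dar{\varphi}\\
A' \rar[tail]{e^{0}} & E^{1} \rar & \cdots \rar & E^{n} \rar[two heads] & E^{n+1}
\end{tikzcd}
\end{equation*}
and set $\eta = \varphi^{\ast}\xi$ for the upper-row class. I claim $d^{0} : A' \to U^{1}$ is a special $\SJ$-preenvelope of $A'$. That $d^{0}$ arises as the leftmost morphism of an $n$-pullback of an admissible $n$-exact sequence along $\varphi \in {}^{\perp}\SJ$ is immediate from the construction (it is the dual of Definition \ref{Def-Special Precover}); the remaining content is $d^{0} \in \SJ$.

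Given $\phi : X \to A$ in $\SI$ and $\gamma \in \Ext^{n}(A, A')$, I argue as follows. Using $\SX$-injectivity of the $E^{i}$, inductively lift $\id_{A'}$ through the admissible $n$-exact sequence representing $\gamma$ into the coresolution, obtaining a chain map with initial component $\id_{A'}$ and terminal component some $h : A \to E^{n+1}$ realizing $\gamma = h^{\ast}\xi$ in $\Ext^{n}(A, A')$. By Proposition \ref{Ext},
\[ \Ext^{n}(\phi, A')(\gamma) \;=\; \phi^{\ast}h^{\ast}\xi \;=\; (h\phi)^{\ast}\xi. \]
Since $\phi \in \SI$ and $\SI$ is an ideal, $h\phi \in \SI$, so the $\SI$-precover property of $\varphi$ produces $t : X \to X^{n+1}$ with $h\phi = \varphi t$. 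Hence
\[ \Ext^{n}(\phi, A')(\gamma) \;=\; t^{\ast}\varphi^{\ast}\xi \;=\; t^{\ast}\eta. \]
Now $t^{\ast}\eta$ lies in the image of the connecting map $\SC(X, X^{n+1}) \to \Ext^{n}(X, A')$ attached to the $n$-extension $\eta$, which equals the kernel of $\Ext^{n}(X, d^{0})$ because $d^{0}$ is the initial morphism of $\eta$. Applying Proposition \ref{Ext} once more,
\[ \Ext^{n}(\phi, d^{0})(\gamma) \;=\; \Ext^{n}(X, d^{0})\bigl(\Ext^{n}(\phi, A')(\gamma)\bigr) \;=\; 0, \]
so $d^{0} \in \SJ$, completing the verification.

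The main technical obstacle is the comparison step delivering $h$ with $\gamma = h^{\ast}\xi$: extending $\id_{A'}$ to a chain map of $n$-exact sequences from a representative of $\gamma$ into the coresolution. I expect this to proceed iteratively, extending across the admissible monomorphism $A' \rightarrowtail G^{1}$ by $\SX$-injectivity of $E^{1}$, continuing by combining $\SX$-injectivity of each $E^{i+1}$ with the weak-cokernel property of the $n$-exact sequence representing $\gamma$, and assembling the components via (the dual of) \cite[Proposition 4.8]{Ja}. A secondary ingredient is the long exact sequence in Ext invoked at the final step, which holds in the $n$-exact setting by viewing $\eta$ as an $\SE$-acyclic complex in the ambient $(\SA, \SE)$ and splicing its constituent admissible short exact sequences. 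The reverse implication is produced by the dual construction starting from a special $\SJ$-preenvelope, with the closure of $\SI$ under $n$-coextensions by $\SX$-projectives invoked to compensate for the absence of a projective coresolution in the hypothesis.
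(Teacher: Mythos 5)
Your argument is correct for the direction you sketch in detail, and it follows a genuinely different route from the paper. The paper's proof is diagrammatic: after taking a special $\SI$-precover $i$ of the cosyzygy $X$ and the $n$-pullback of the injective coresolution along $i$, the authors assemble a large commutative ``cube'' using Construction~\ref{Rem-Complete} and then invoke the hypothesis that $\SJ$ is closed under $n$-extensions by $\SX$-injective objects to conclude that the intermediate morphism $j_1$ lies in $\SJ$, whence so does $j$. You instead argue functorially: representing an arbitrary $\gamma\in\Ext^n(A,A')$ as $h^*\xi$ by lifting along the $\SX$-injective coresolution (the base case uses $\SX$-injectivity of $E^1$, the inductive steps use the weak cokernel property and $e^{k}e^{k-1}=0$), reducing $\Ext^n(\phi,A')(\gamma)$ to $t^*\eta$ via the $\SI$-precover property of $\varphi$, and then killing $\Ext^n(\phi,d^0)(\gamma)=\Ext^n(X,d^0)(t^*\eta)$ by the long exact $\Ext$-sequence for $\eta$. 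This buys you two things: a cleaner computation that avoids the three-dimensional diagram, and---notably---a proof of the forward implication that makes no use of the hypothesis that $\SJ$ is closed under $n$-extensions by $\SX$-injective objects, which the paper's proof relies on essentially. (Equivalently, one can replace your long-exact-sequence step by the direct observation that $(d^0)_*\eta=0$, since $d^0$ trivially factors through the leading monomorphism of $\eta$ and hence the $n$-pushout is contractible; this avoids even invoking the full long exact sequence.) Two caveats are worth flagging. First, the long exact sequence you quote is the one in Proposition~\ref{Definition-Proposition 2.15}, whose availability the paper itself justifies elsewhere (in the Wakamatsu's Lemma proof) from the ``enough $\SX$-injectives'' hypothesis, so your reliance on it is consistent with the paper's framework, though you should state this dependence explicitly rather than leaving it implicit. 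Second, your treatment of the converse direction is only a sketch and does not quite hold together as stated: a literal dualisation requires enough $\SX$-projectives, which is not among the hypotheses, and it is not transparent how the closure of $\SI$ under $n$-coextensions by $\SX$-projectives alone substitutes for a projective coresolution; the paper is equally terse on this point, but if you want a complete proof you would need to spell this direction out carefully.
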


\begin{proof}
We assume that $\SI$ is a special precovering ideal and prove that $\SJ$ is special preenveloping. The converse statement can be proved dually.
Let $A$ be an object of $\SC$. Since $\SC$ has enough $\SX$-injective objects, there is an admissible $n$-exact sequence
$\eta: A \rightarrowtail E^1 \rt \cdots \rt E^n \twoheadrightarrow X$ such that $E^i$, for $1 \leq i \leq n$, is $\SX$-injective. Since $\SI$ is special precovering, there exists an $n$-pushout diagram
\begin{equation*}
\begin{tikzcd}
Y \dar{j'} \rar[tail] & Z^1 \rar \dar & \cdots \rar & Z^{n-1} \rar \dar & Z^{n} \rar[two heads] \dar{h} &  X \dar[equal]\\
C' \rar[tail] & X^1 \rar & \cdots \rar & X^{n-1} \rar & X^n \rar[two heads]{i}  & X
\end{tikzcd}
\end{equation*}
where $i: X^n \rt X$ in $\SI$ is a special $\SI$-precover of $X$ and $j' \in \SI^{\perp}=\SJ$. Now consider $n$-pullback of $\eta$ along $i: X^n \rt X$ to get the following diagram
\begin{equation*}
\begin{tikzcd}
A \dar[equal] \rar[tail]{j} & C^1 \rar \dar & C^2 \rar \dar & \cdots \rar & C^{n} \rar[two heads] \dar &  X^n \dar{i} \\
A \rar[tail] & E^1 \rar & E^2 \rar & \cdots \rar & E^n \rar[two heads]  & X
\end{tikzcd}
\end{equation*}
To complete the proof it is enough to show that $j: A \rt C^1$ belongs to $\SJ$.  To this end, take $n$-pullback of $\eta$ along $ih$, to get the following commutative diagram
{\footnotesize{
\begin{equation*}
\begin{tikzcd}[column sep=small]
&&&&&&&&&&&&Y\dlar{j'}\ar[tail]{dd}\\
&&&&&&&&&&&C'\ar[tail]{dd}\\
&&&&&&&&&&&&Z^1\ar{dd}\dlar\\
&&&&&&&&& &&X^1\ar{dd}\\
&&&&&&&&&&&&\vdots\ar{dd}\dlar[dotted]\\
&&&&&&&&&&&\vdots\ar{dd}\\
&&A\dlar[equals]\ar[tail]{rr}\ar[equals]{dd}&&Y^1\dlar\ar{rr}\ar{dd}&&Y^2\ar{rr}\ar{dd}\dlar&&\cdots\ar{rr}\dlar[dotted]\ar[dotted]{dd}&&Y^n\dlar\ar[two heads]{rr}\ar{dd}&& Z^n\dlar{h}\ar[two heads]{dd} \\
&A\ar[tail]{rr}[near start]{j}\ar[equals]{dd} && C^1\ar{rr}\ar{dd}&&C^2\ar{rr}\ar{dd}&& \cdots \ar{rr}\ar[dotted]{dd} && C^n \ar[two heads]{rr}\ar{dd} && X^n\ar[two heads]{dd}[near start]{i}\\
&&A\dlar[equals]\ar[tail]{rr}&&E^1\ar{rr}\dlar[equals]&&E^2\ar{rr}\dlar[equals]&&\cdots\ar{rr}\dlar[dotted]&&E^n\dlar[equals]\ar[two heads]{rr}&&X\dlar[equals]\\
&A\ar[tail]{rr}&&E^1\ar{rr}&& E^2\ar{rr}&&\cdots\ar{rr}&&E^n\ar[two heads]{rr}&&X
\end{tikzcd}
\end{equation*}}}

Now we start from the rightmost column of the above diagram and use Construction \ref{Rem-Complete} step by step to get the following diagram, in which all rows and columns are admissible $n$-exact sequences.
{\footnotesize{
\begin{equation*}
\begin{tikzcd}[column sep=small]
&&&&Y\ar[equals]{rr}\ar[tail]{dd}\dlar{j'}&&Y\dlar{j'}\ar[equals]{rr}\ar[tail]{dd}&&\cdots\ar[equals]{rr}\dlar[dotted] \ar[dotted]{dd} &&Y\ar[equals]{rr}\ar[tail]{dd}\dlar{j'}&&Y\dlar{j'}\ar[tail]{dd}\\
&&&C'\ar[equals]{rr}\ar[tail]{dd}&&C'\ar[equals]{rr}\ar[tail]{dd}&&\cdots\ar[equals]{rr}\ar[dotted]{dd} &&C'\ar[equals]{rr}\ar[tail]{dd}&&C'\ar[tail]{dd}\\
&&&&U_1^1\ar{dd}\ar{rr}\dlar&&U_2^1\dlar\ar{rr}\ar{dd}&&\cdots\ar{rr}\dlar[dotted] &&U_n^{1}\ar{dd}\ar{rr}\dlar&&Z^1\ar{dd}\dlar\\
&&&V_1^1\ar{rr}\ar{dd}&&V_2^1\ar{rr}\ar{dd}&&\cdots\ar{rr}&&V_n^1\ar{rr}\ar{dd}&& X^1\ar{dd}\\
&&&&\vdots \ar{dd}\dlar[dotted]\ar[dotted]{rr} &&\vdots \ar{dd}\dlar[dotted]&&&&\vdots\ar{dd}\dlar[dotted]\ar[dotted]{rr} &&\vdots\ar{dd}\dlar[dotted]\\
&&&\vdots\ar{dd}\ar[dotted]{rr}&&\vdots \ar{dd}&&&&\vdots\ar{dd}\ar[dotted]{rr}&&\vdots\ar{dd}\\
&&A\dlar[equals]\ar[tail]{rr}\ar[equals]{dd}&&Y^1\dlar{j_1}\ar{rr}\ar[two heads]{dd} && Y^2\ar{rr}\dlar\ar[two heads]{dd} &&\cdots\ar{rr}\dlar[dotted]\ar[dotted]{dd} && Y^n\dlar\ar[two heads]{rr}\ar[two heads]{dd}&& Z^n\dlar{h}\ar[two heads]{dd} \\
& A\ar[tail]{rr}[near start]{j}\ar[equals]{dd} && C^1\ar{rr}\ar[two heads]{dd}&& C^2\ar{rr}\ar[two heads]{dd}&&\cdots \ar{rr}\ar[dotted]{dd}  && C^n
\ar[two heads]{rr} \ar[two heads]{dd} && X^n\ar[two heads]{dd}[near start]{i}\\
&&A\dlar[equals]\ar[tail]{rr}&&E^1\ar{rr}\dlar[equals]&&E^2\ar{rr}\dlar[equals]&&\cdots \ar{rr}\dlar[dotted] &&E^n\dlar[equals]\ar[two heads]{rr}&&X\dlar[equals]\\
&A\ar[tail]{rr}&&E^1\ar{rr}&& E^2\ar{rr}&&\cdots\ar{rr}&&E^n\ar[two heads]{rr}&&X
\end{tikzcd}
\end{equation*}}}
 Since $\SJ$ is closed under $n$-extensions by injective objects, we infer that $j_1$ is in $\SJ$. Hence $j$ factors through $j_1$ and so $j \in \SJ$, as we desired.
\end{proof}

\section{Special precovering ideals}\label{Special Precovering}
The main purpose of this section is to study connections between special precovering ideals and $n$-phantom morphisms in an $n$-cluster tilting subcategory. In particular, we show that every special precovering ideal, under some conditions, can be represented as an ideal of $n$-$\SF$-phantom morphisms, for some bifunctor $\SF$ of $\Ext^n$. Throughout the section $(\SC, \SX)$ is an $n$-cluster tilting subcategory of an exact category $(\SA, \SE)$.\\

Let us begin with the following useful lemma.

\begin{lemma}\label{16}
Let $A\in\SC$ and $\gamma:~~ K\rightarrowtail Y^1\rt  \cdots \rt Y^n \st{p}\twoheadrightarrow A$ be an admissible $n$-exact sequence such that $p$ is a projective morphism. Then a morphism $\varphi:  X \rt A$ is an $n$-$\SF$-phantom morphism if and only if the $n$-pullback of $\gamma$ along $\varphi$ is an $\SF$-admissible $n$-exact sequence.
\end{lemma}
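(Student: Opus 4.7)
The plan is to prove both implications, with the forward direction being immediate and the bulk of the work in the converse. The forward direction follows directly from the definition of $n$-$\SF$-phantom morphism, since $\gamma$ is one particular $\SX$-admissible $n$-exact sequence ending at $A$, so its $n$-pullback along $\varphi$ is automatically in $\SF$.

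For the converse, my strategy is to show that every admissible $n$-exact sequence ending at $A$ is a Yoneda pushout of $\gamma$, and then exploit the naturality of $\Ext^n_{\SX}$ together with the closure of $\SF$ under $n$-pushouts. Concretely, given an arbitrary $\SX$-admissible $n$-exact sequence $\eta: A' \rightarrowtail X^1 \rt \cdots \rt X^n \twoheadrightarrow A$, I would invoke the projectivity of $p$ to lift the identity $1_A$ to a morphism of complexes $\gamma \lrt \eta$ that is the identity on the rightmost term. Writing $f: K \lrt A'$ for its leftmost component, the resulting morphism of $n$-exact sequences witnesses the equality $[\eta] = f_*[\gamma]$ in $\Ext^n_{\SX}(A, A')$.

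Pulling back along $\varphi$ and applying Proposition \ref{Ext} then gives
\[ \varphi^*[\eta] \;=\; \varphi^* f_*[\gamma] \;=\; f_* \varphi^*[\gamma]. \]
By hypothesis $\varphi^*[\gamma] \in \SF$, and since $\SF$ is an $n$-proper class it is closed under $n$-pushouts, so $f_* \varphi^*[\gamma]$ again lies in $\SF$. Thus $\varphi^*[\eta] \in \SF$, which is precisely what it means for $\varphi$ to be an $n$-$\SF$-phantom morphism. This completes the argument modulo the lifting step.

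The main obstacle I anticipate is justifying this lifting: producing the morphism of complexes $\gamma \lrt \eta$ restricting to the identity on $A$. For $n=1$ this is the universal property of a projective presentation in the classical sense of Herzog. For $n>1$, I expect to construct the lift inductively from the right, first lifting $p$ through the rightmost deflation of $\eta$ by the projectivity hypothesis on $p$, and then descending term by term using the weak cokernel property afforded by the $n$-exact structure (as in Proposition 4.8 of \cite{Ja}). Once this higher comparison lemma is secured, the rest of the argument is formal naturality.
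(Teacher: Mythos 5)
Your proposal follows essentially the same route as the paper: lift $p$ through the rightmost deflation of $\eta$ using projectivity, extend to a morphism of $n$-exact sequences $\gamma \to \eta$ that is the identity on $A$, invoke Proposition 4.8 of \cite{Ja} to recognize $\eta$ as the $n$-pushout of $\gamma$ along the leftmost component, and then commute pullback along $\varphi$ past pushout via Proposition \ref{Ext} to conclude using closure of $\SF$ under $n$-pushouts. One small slip: the term-by-term descent after the initial lift uses the \emph{weak kernel} property of the $n$-exact sequence $\eta$ (i.e.\ exactness of $\SC(Y^{i-1}, -)$ applied to $\eta$), not the weak cokernel property; the rest is correct as stated.
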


\begin{proof}
The `only if' part is obvious, by definition. For the proof of the `if' part,
Let \[\eta: \ \ A' \rightarrowtail X^1 \rt \cdots\rt X^n \twoheadrightarrow  A\] be an arbitrary admissible $n$-exact sequence. We show that $n$-pullback of $\eta$ along $\varphi$ is an $\SF$-admissible $n$-exact sequence. Since $p$ is a projective morphism, there is a morphism $Y^n\rt X^n$ that makes the rightmost square of the following diagram commutative. Now by the factorization property of weak kernels we can complete the diagram and in fact get a morphisms of admissible $n$-exact sequences
\begin{equation*}
\begin{tikzcd}
\gamma :& K\dar{g}\rar[tail] &  \cdots \rar& Y^{n-1}\rar\dar& Y^n\rar[two heads]{p}\dar& A\dar[equals]\\
\eta:& A' \rar[tail] & \cdots\rar& X^{n-1}\rar& X^n\rar[two heads] & A
\end{tikzcd}
\end{equation*}
So, by Proposition 4.8 of \cite{Ja}, $\eta$ is the $n$-pushout of $\gamma$ along the morphism $g$. Hence $\eta= \Ext^n(A, g)(\gamma)$ and is a part of the following diagram
\begin{equation*}
\begin{tikzcd}[column sep=small]
& \gamma':&  K\dlar{g}\ar[tail]{rr}{i}\ar[equals]{dd} &&Z^1\dlar\ar{rr}\ar{dd} &&\cdots\dlar[dotted]\ar{rr}\ar[dotted]{dd}&&Z^n\ar{dd} \dlar
\ar[two heads]{rr}&&X\dlar[equals]\ar{dd}{\varphi}\\
\eta': &     A'\ar[tail]{rr}\ar[equals]{dd} &&W^1\ar{rr}\ar{dd}&&\cdots\ar{rr}\ar[dotted]{dd}&& W^n\ar[two heads]{rr}\ar{dd}&&X\ar{dd}[near start]{\varphi}&& \\
&\gamma: ~~~~~~~~~&K\dlar{g}\ar[tail]{rr}&&Y^1\ar{rr}\dlar&&\cdots\dlar[dotted]\ar{rr}&&Y^n\dlar\ar[two heads]{rr}&&A\dlar[equals]\\
\eta:  & A'\ar[tail]{rr}&&X^1\ar{rr}&&\cdots\ar{rr}&&X^n\ar[two heads]{rr}&&A
\end{tikzcd}
\end{equation*}
By assumption, $\gamma'$, the $n$-pullback of $\gamma$ along $\varphi$, is an $\SF$-admissible $n$-exact sequence. Now by Proposition \ref{Ext} we have
\begin{align*}
\ \eta'=&\  \Ext^n(\varphi, A')(\eta)  \\
=& \ \Ext^n(\varphi, A')\Ext^n(A, g)(\gamma) \\
=&  \ \Ext^n(X, g)\Ext^n(\varphi, K)(\gamma) \\
=& \ \Ext^n(X, g)(\gamma').
\end{align*}
So $\eta'$ is the $n$-pushout of $\SF$-admissible $n$-exact sequence $\gamma'$ and hence is an $\SF$-admissible $n$-exact sequence.
\end{proof}

\begin{theorem}\label{17}
Let $(\SC, \SX)$ be an $n$-cluster tilting subcategory with enough projective morphisms and $\SF\subseteq \Ext^n$ be a subfunctor with enough  injective morphisms. Let $A\in\SC$ and consider the $n$-pushout of admissible $n$-exact sequence $\gamma: \ \  K \rightarrowtail Y^1\rt  \cdots  \rt Y^n\stackrel{p} \twoheadrightarrow A$, where $p$ is projective admissible epimorphism, along an $\SF$-injective $\SF$-admissible monomorphism $e: K \rt A'$
\begin{equation*}
\begin{tikzcd}
\gamma :& K\dar{e}\rar[tail]&  Y^1\rar\dar& \cdots \rar&  Y^n\rar[two heads]{p}\dar& A\dar[equals]\\
& A' \rar[tail] & X^{1}\rar & \cdots\rar& X^n\rar[two heads]{\varphi}& A
\end{tikzcd}
\end{equation*}
Then $\varphi: X^n \rt A$ is a special $n$-$\SF$-phantom precover of $A$.
\end{theorem}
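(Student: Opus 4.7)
My plan is to verify the two conditions of Definition \ref{Def-Special Precover} separately: (a) that $\varphi \in \Phi(\SF)$, and (b) that $\varphi$ arises as the rightmost morphism in an $n$-pushout of an admissible $n$-exact sequence along a morphism lying in $\Phi(\SF)^{\perp}$.

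Condition (b) should be almost immediate. The diagram in the statement already presents $\varphi$ as the rightmost map obtained by $n$-pushing out $\gamma$ along $e$, and $e$ is $\SF$-injective by hypothesis. Proposition \ref{15} gives $\Phi(\SF) = {}^{\perp}(\SF\mbox{-}{\rm inj})$, so taking right orthogonals yields the inclusion $\SF\mbox{-}{\rm inj} \subseteq \Phi(\SF)^{\perp}$; hence $e \in \Phi(\SF)^{\perp}$ as required.

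For condition (a) I would invoke Lemma \ref{16}: because the rightmost morphism $p$ of $\gamma$ is projective, $\varphi$ will be an $n$-$\SF$-phantom morphism provided only that the single $n$-pullback $\delta := \Ext^n(\varphi, K)(\gamma)$ of $\gamma$ along $\varphi$ is $\SF$-admissible. Writing $\delta$ as $K \stackrel{i}{\rightarrowtail} U^1 \rt \cdots \rt U^n \twoheadrightarrow X^n$, the key computation combines the two expressions for $\Ext^n(\varphi, e)(\gamma)$ supplied by Proposition \ref{Ext}:
\[
\Ext^n(X^n, e)(\delta) = \Ext^n(\varphi, e)(\gamma) = \Ext^n(\varphi, A')(\gamma'),
\]
where the right-hand side, being the $n$-pullback of $\gamma'$ along its own rightmost morphism $\varphi$, is contractible.

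The final move deduces that $\delta$ itself is $\SF$-admissible from the fact that its $n$-pushout along $e$ is split. This mirrors the closing argument in Proposition \ref{15}: splitness of the pushout produces a factorization $e = g \circ i$ of the $\SF$-admissible monomorphism $e$ through the $\SX$-admissible monomorphism $i$, whereupon Lemma \ref{5} promotes $i$ to an $\SF$-admissible monomorphism and hence places $\delta$ in $\SF$. The main step will be establishing the functoriality chain above, because one must re-express the single element $\Ext^n(\varphi, e)(\gamma)$ in two different ways in order to couple the split-pullback property of $\gamma'$ (which comes for free since $\varphi$ is the rightmost morphism of $\gamma'$) with the $\SF$-injective behaviour of $e$; once that identification is in hand, everything reduces to successive appeals to Lemma \ref{16}, Lemma \ref{5}, and Proposition \ref{15}.
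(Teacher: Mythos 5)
Your proposal is correct and follows essentially the same route as the paper: reduce via Lemma \ref{16} to checking that the single $n$-pullback $\delta$ of $\gamma$ along $\varphi$ is $\SF$-admissible, obtain a factorization $e = g\circ i$ of the $\SF$-admissible monomorphism $e$ through the inflation $i$ of $\delta$, and conclude with Lemma \ref{5}. The only cosmetic difference is that you establish the factorization by computing the class $\Ext^n(X^n,e)(\delta)=\Ext^n(\varphi,e)(\gamma)=\Ext^n(\varphi,A')(\gamma')=0$ and then split the inflation of the $n$-pushout, whereas the paper exhibits the explicit morphism of $n$-exact sequences from $\delta$ to the pushout row, observes it is null-homotopic because its rightmost component factors through the deflation $\varphi$, and reads the factorization directly off the homotopy.
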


\begin{proof}
Since $e\in \Phi(\SF)^{\perp}$, to show that $\varphi$ is a special $n$-$\SF$-phantom precover of $A$, it is enough to show that $\varphi$ is an $n$-$\SF$-phantom morphism. This we do. To this end, by Lemma \ref{16}, it is enough to show that $n$-pullback of $\gamma$ along $\varphi: X^n \rt A$ is an $\SF$-admissible $n$-exact sequence. The diagram
\begin{equation*}
\begin{tikzcd}
\gamma': & K\rar[tail]{i}\dar[equals]& Z^1\rar\dar &\cdots\rar& Z^n\rar[two heads] \dar & X^n\dar{\varphi}\\
\gamma: & K\rar[tail]\dar{e} &Y^1\rar\dar &\cdots\rar& Y^n\rar[two heads]{p}\dar & A\dar[equals] &  \\
 &A' \rar& X^{1}\rar[tail] & \cdots\rar& X^n\rar[two heads]{\varphi}& A
\end{tikzcd}
\end{equation*}
give us, by composition, the following morphism of admissible $n$-exact sequences
\begin{equation*}
\begin{tikzcd}
\gamma': & K\rar[tail]{i}\dar{e}& Z^1\dlar[dotted]{g}\rar\dar &\cdots\rar& Z^n\rar[two heads] \dar& X^n\dlar[dotted]{1_X}\dar{\varphi}\\
 &A' \rar[tail]& X^{1}\rar& \cdots\rar& X^n\rar[two heads]{\varphi}& A
\end{tikzcd}
\end{equation*}
Since there exists morphism $1_X: X \lrt X$ such that $\varphi = \varphi 1_X$, this composition is null-homotopic, see e.g. \cite[Lemma 3.6]{Fe}. So there exists morphism $g: Z^1 \lrt A'$ such that $e=gi$. Now since $e$ is an $\SF$-admissible monomorphism, by Lemma \ref{5}, $i$ is an $\SF$-admissible monomorphism and hence $\gamma'$ is an $\SF$-admissible $n$-exact sequence.
\end{proof}

In particular, we have proved the following theorem that is the main result of this section.

\begin{theorem}\label{1}
Let $(\SC, \SX)$ be an $n$-cluster tilting subcategory of $\SA$ with enough projective morphisms. Then the ideal $\SI$ of $\SC$ is special precovering
if there exists an additive subfunctor $\SF\subseteq\Ext^n$ with enough injective morphisms and $\SI=\Phi(\SF)$.
\end{theorem}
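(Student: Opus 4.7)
The plan is to invoke Theorem~\ref{17} directly, object by object. Fix an arbitrary $A\in\SC$; the task is to exhibit a special $\SI$-precover of $A$. Since $\SC$ has enough projective morphisms, there exists an admissible $n$-exact sequence
\[\gamma:\ K\rightarrowtail Y^{1}\to\cdots\to Y^{n}\stackrel{p}{\twoheadrightarrow}A\]
whose rightmost morphism $p$ is projective.

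Since the subfunctor $\SF$ is assumed to have enough injective morphisms, there exists an $\SF$-admissible monomorphism $e\colon K\to A'$ which is moreover an $\SF$-injective morphism. Taking the $n$-pushout of $\gamma$ along $e$ yields an admissible $n$-exact sequence
\[A'\rightarrowtail X^{1}\to\cdots\to X^{n}\stackrel{\varphi}{\twoheadrightarrow}A\]
whose rightmost morphism $\varphi\colon X^{n}\to A$ sits in exactly the diagram required by the hypothesis of Theorem~\ref{17}. That theorem then provides that $\varphi$ is a special $n$-$\SF$-phantom precover of $A$; that is, $\varphi\in\Phi(\SF)$ and $\varphi$ arises as the rightmost morphism of an $n$-pushout along $e\in\Phi(\SF)^{\perp}$. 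Substituting the hypothesis $\SI=\Phi(\SF)$ (and therefore $\SI^{\perp}=\Phi(\SF)^{\perp}$), we conclude that $\varphi$ is a special $\SI$-precover of $A$. Since $A$ was arbitrary, $\SI$ is a special precovering ideal.

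The real work has already been absorbed into Theorem~\ref{17}, so the present argument is essentially a packaging step. The nontrivial point of that theorem, for anyone reading this proof without prior context, is that even though we push out $\gamma$ along an $\SF$-\emph{injective} morphism $e$, the resulting morphism $\varphi$ still lies in $\Phi(\SF)$. That step combines three ingredients of this section: Lemma~\ref{16}, which reduces the verification of $\varphi\in\Phi(\SF)$ to pulling back only the particular sequence $\gamma$ (this is where projectivity of $p$ is used essentially, and this is why we required $\SC$ to have enough projective morphisms); the null-homotopy produced by the identity on $X^n$, which factors $e=gi$ through the leftmost morphism $i$ of that pullback; and Lemma~\ref{5}, which then upgrades $i$ to an $\SF$-admissible monomorphism, making the $n$-pullback of $\gamma$ along $\varphi$ an $\SF$-admissible $n$-exact sequence. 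No further computation beyond invoking Theorem~\ref{17} is needed for the present statement.
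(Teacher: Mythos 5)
Your proof is correct and takes essentially the same approach as the paper: the paper presents Theorem~\ref{1} without a separate argument, stating it as an immediate consequence of Theorem~\ref{17}, and your write-up simply makes that packaging step explicit (choosing $\gamma$ from enough projective morphisms, choosing $e$ from enough $\SF$-injective morphisms, applying Theorem~\ref{17}, and substituting $\SI=\Phi(\SF)$). The explanatory paragraph about why Theorem~\ref{17} works accurately reflects the ingredients in the paper's proof of that theorem.
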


Last result of this section, shows that every special precovering ideal $\SI$, under some conditions, can be represented as an ideal of $n$-$\PB(\SI)$-phantom morphisms.

\begin{theorem}
Let $(\SC, \SX)$ be an $n$-cluster tilting subcategory of $\SA$ with enough $\SX$-injective objects.  Let $\SI$ be an ideal of $\SC$ such that $\SI$ is closed under $n$-coextensions by projectives and $\SI^{\perp}$ is closed under $n$-extensions by injectives. If $\SI$ is a special precovering ideal then there exists an additive subfunctor $\SF\subseteq\Ext^n$ with enough special injective morphisms such that $\SI=\Phi(\SF)$.
\end{theorem}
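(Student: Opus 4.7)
The plan is to take $\SF := \PB(\SI)$, the class of admissible $n$-exact sequences arising as $n$-pullbacks of admissible $n$-exact sequences along morphisms in $\SI$, as introduced in the notation preceding Proposition \ref{7}. By that proposition together with Remark \ref{PB(I)}, this choice already furnishes an additive subfunctor of $\Ext^n$. The identification $\SI = \Phi(\SF)$ is then immediate from Proposition \ref{14}, which applies precisely because $\SI$ is assumed to be a special precovering ideal.

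The remaining task is to verify that this $\SF$ has enough special injective morphisms. The strategy is to upgrade the $n$-orthogonal pair $(\SI, \SI^{\perp})$ to a \emph{complete} $n$-ideal cotorsion pair and then invoke Corollary \ref{implication 3 to 4}. Theorem \ref{13} already guarantees that $(\SI, \SI^{\perp})$ is an $n$-ideal cotorsion pair, so the content of completeness that remains to be checked is the existence, for every $A \in \SC$, of a special $\SI^{\perp}$-preenvelope of $A$ in the sense of (the dual of) Definition \ref{Def-Special Precover}.

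Here Salce's Lemma (Theorem \ref{18}) provides exactly what is needed. All of its hypotheses are already in force: the ambient category $\SC$ has enough $\SX$-injective objects by assumption; $\SI$ is closed under $n$-coextensions by $\SX$-projectives by assumption; and $\SJ := \SI^{\perp}$ is closed under $n$-extensions by $\SX$-injectives, again by hypothesis. Since $\SI$ is special precovering, Theorem \ref{18} then delivers that $\SJ$ is special preenveloping, so the cotorsion pair $(\SI, \SI^{\perp})$ satisfies Definition \ref{12} and is complete. Corollary \ref{implication 3 to 4} then yields that $\SF = \PB(\SI)$ has enough special injective morphisms, as desired.

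There is no genuine new obstacle in this argument: the theorem is essentially a packaging of Proposition \ref{14}, Theorem \ref{13}, Theorem \ref{18} and Corollary \ref{implication 3 to 4}. The only place one must be mildly careful is in recognising that the precovering hypothesis on $\SI$ plus the two closure conditions are exactly the data required to trigger Salce's Lemma, and that the special preenveloping conclusion of that lemma is precisely the missing half needed to apply the completeness-based Corollary \ref{implication 3 to 4}.
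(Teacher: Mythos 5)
Your proposal is correct and follows the paper's own route step by step: take $\SF = \PB(\SI)$, use Theorem \ref{13} and Salce's Lemma (Theorem \ref{18}) to complete the cotorsion pair $(\SI,\SI^{\perp})$, then apply Corollary \ref{implication 3 to 4} for enough special injective morphisms and Proposition \ref{14} for the identification $\SI=\Phi(\SF)$. The only (minor) difference is that you make explicit the appeal to Proposition \ref{14} for the equality $\SI=\Phi(\SF)$, which the paper leaves implicit inside its citation of Corollary \ref{implication 3 to 4}.
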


\begin{proof}
 Since $\SI$ is a special precovering ideal, by Theorem \ref{13}, $(\SI, \SI^{\perp})$ is an  $n$-ideal cotorsion pair. Also since  $\SI$ is closed under $n$-coextensions by projectives and $\SI^{\perp}$ is closed under $n$-extensions by injectives, Salce's Lemma \ref{18}, implies that  the $n$-ideal cotorsion pair $(\SI, \SI^{\perp})$ is complete.  Now put $\SF=\rm{PB}(\SI)$ and use  Corollary \ref{implication 3 to 4}.
 \end{proof}

\section{Wakamatsu's Lemma}\label{Wakamatsu}
In this section we state and prove an ideal version of the Wakamatsu's Lemma \cite{Wak} in higher homological algebra. An ideal version of Wakamatsu's Lemma in an exact category is proved in \cite[Theorem 10.3]{FH}. For a version in $(n+2)$-angulated categories and a version in $n$-abelian categories see \cite[\S 3]{Jor} and
\cite[Theorem 4.2]{AMS}, respectively.

Throughout the section $(\SC, \SX)$ is an $n$-cluster tilting subcategory of an exact category $(\SA, \SE)$.

We need the following version of \cite[Definition-Proposition 2.15]{IJ} in the proof of the main result of this section. The proof follows using similar argument so we skip it. As before, for simplicity of notation, we write $\Ext^i$ instead of $\Ext^i_{\SX}$.

\begin{proposition}\label{Definition-Proposition 2.15}
 Let $(\SC, \SX)$ be an $n$-cluster tilting subcategory of the exact category $(\SA, \SE)$. The following conditions are equivalent.
\begin{itemize}
\item [$(a)$] $\Ext^i(\SC, \SC)= 0$, for all $i \notin n\Z.$
\item [$(b)$] $\SC$ is closed under $n$-syzygies, that is, $\Om^n(\SC) \subseteq \SC.$
\item [$(c)$] $\SC$ is closed under $n$-cosyzygies, that is, $\Om^{-n}(\SC) \subset \SC.$
\item [$(d)$] For each $X \in \SC$ and for each admissible $n$-exact sequence
\[ L \rightarrowtail  M^1 \lrt M^2 \lrt \cdots \lrt  M^n \twoheadrightarrow N  \]
in $\SX$, there exists an exact sequence
\begin{align*}
0 \lrt & \ \ \ \ \SC(X,L) \ \ \lrt  \ \ \SC(X,M^1) \ \  \lrt  \cdots \lrt \ \ \SC(X,M^n) \ \ \  \lrt \ \SC(X,N)\\
{}\lrt & \ \ \Ext^n(X,L) \lrt  \Ext^n(X,M^1) \ \lrt  \cdots \lrt \Ext^n(X,M^n) \ \lrt \Ext^n(X,N)\\
{}\lrt & \ \Ext^{2n}(X,L)  \lrt  \Ext^{2n}(X,M^1) \lrt  \cdots\lrt \Ext^{2n}(X,M^n) \lrt \Ext^{2n}(X,N)\\
{} \lrt & \cdots
\end{align*}
of abelian groups.

\item [$(e)$]  For each $X\in\SC$ and for each admissible $n$-exact sequence
\[ L \rightarrowtail  M^1 \lrt M^2 \lrt \cdots \lrt  M^n \twoheadrightarrow N  \]
in $\SX$, there exists an exact sequence
\begin{align*}
0 \lrt & \ \ \ \ \SC(N,X) \ \ \lrt  \ \ \SC(M^n,X) \ \  \lrt  \cdots \lrt \ \ \SC(M^1,X) \ \ \  \lrt \ \SC(L,X)\\
{}\lrt & \ \ \Ext^n(N,X) \lrt  \Ext^n(M^n,X) \ \lrt  \cdots \lrt \Ext^n(M^1,X) \ \lrt \Ext^n(L,X)\\
{}\lrt & \ \Ext^{2n}(N,X)  \lrt \Ext^{2n}(M^n,X) \lrt  \cdots\lrt \Ext^{2n}(M^1,X) \lrt \Ext^{2n}(L,X)\\
{} \lrt & \cdots
\end{align*}
of abelian groups.
\end{itemize}
\end{proposition}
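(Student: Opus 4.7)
My plan is to prove the equivalences in the order $(b)\Leftrightarrow(c)$, then $(a)\Leftrightarrow(b)$, then $(a)\Rightarrow(d)$ and $(a)\Rightarrow(e)$; the reverse implications $(d)\Rightarrow(a)$ and $(e)\Rightarrow(a)$ are immediate by inspecting the sequences at degrees $1\le i\le n-1$. The equivalence $(b)\Leftrightarrow(c)$ is a formal duality, since $\SC^{\op}$ is $n$-cluster tilting in $(\SA^{\op},\SE^{\op})$ and this interchanges $n$-syzygies with $n$-cosyzygies while preserving $\Ext$.

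For $(a)\Leftrightarrow(b)$, the idea is to replace the classical dimension shifting along a projective resolution (unavailable in a general exact category) by iterated right $\SC$-approximations. Given $X\in\SC$, the right $\SC$-approximation axiom of an $n$-cluster tilting subcategory produces by iteration an $\SE$-acyclic complex
\[ Y \rightarrowtail C^1 \lrt C^2 \lrt \cdots \lrt C^n \twoheadrightarrow X \]
with all $C^i\in\SC$, and membership $Y\in\SC$ for every $X$ is exactly the content of (b). Splicing this into $n$ conflations in $(\SA,\SE)$ and invoking the cluster tilting vanishing $\Ext^j(C^i,Z)=0$ for $1\le j\le n-1$ and $Z\in\SC$, the resulting long exact sequences concatenate into natural dimension-shift isomorphisms $\Ext^{j+n}(X,Z)\cong \Ext^j(Y,Z)$ for $1\le j\le n-1$. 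Hence $(a)\Rightarrow(b)$, because the left hand side vanishes, forcing $Y\in{}^{\perp_n}\SC=\SC$; conversely $(b)\Rightarrow(a)$ by induction on $i$: writing $i=qn+r$ with $1\le r\le n-1$, iterate $q$ times to reduce $\Ext^i(X,Z)$ to $\Ext^r(\Omega^{qn}X,Z)$, which vanishes by the cluster tilting axiom since $\Omega^{qn}X\in\SC$ by (b).

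For $(a)\Rightarrow(d)$, the admissible $n$-exact sequence $L\rightarrowtail M^1\to\cdots\to M^n\twoheadrightarrow N$ in $\SX$ is $\SE$-acyclic by Theorem \ref{Th-n-exact}, so it splices into conflations $K^{j-1}\rightarrowtail M^j\twoheadrightarrow K^j$ in $\SA$ with $K^0=L$ and $K^n=N$. Each conflation yields a long exact $\Ext^\bullet(X,-)$ sequence for any $X\in\SC$. Using (a) and dimension shifting along the latter part of the splicing, I would argue inductively that $\Ext^i(X,K^j)\cong \Ext^{i+n-j}(X,N)$ in the ranges required for gluing, and substitute these identifications into the concatenation of the $n$ long exact sequences to produce the long exact sequence announced in (d). Part (e) follows by the dual argument using contravariant $\Hom$.

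The main obstacle will be the gluing in the third step: stitching the $n$ spliced long exact $\Ext$-sequences into a single long exact sequence with jumps of length $n$ requires delicate bookkeeping of the intermediate cokernels $K^j$, which in general do not lie in $\SC$, and careful use of (a) to eliminate every $\Ext$-term involving $K^j$ that lies strictly between two consecutive multiples of $n$. A secondary subtlety is in the dimension shifting of the second step: because projective resolutions are unavailable, the cluster tilting vanishing $\Ext^j(\SC,\SC)=0$ for $1\le j\le n-1$ must play the role that $\Ext$-acyclicity of projectives plays in the classical argument.
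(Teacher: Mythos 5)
The paper does not actually prove this statement: it cites Iyama--Jasso [IJ, Definition--Proposition 2.15] and explicitly skips the argument, so there is no written proof in the paper to match your outline against. Judged on its own merits, your sketch has two concrete gaps. The first is the claim that $(d)\Rightarrow(a)$ and $(e)\Rightarrow(a)$ are ``immediate by inspecting the sequences at degrees $1\le i\le n-1$.'' This cannot be right as stated: the sequences in $(d)$ and $(e)$ contain only the groups $\SC(-,-)$ and $\Ext^{kn}(-,-)$ for $k\geq 1$; the groups $\Ext^i$ with $i\notin n\Z$ simply do not appear, so there is nothing to inspect at those degrees. These implications have to be routed through $(b)$ or $(c)$, and that requires an actual argument --- to invoke $(d)$ one first needs an admissible $n$-exact sequence in $\SX$, and producing one that witnesses the wanted vanishing is a genuine piece of the work, not an inspection.

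The second gap is in the dimension-shift step for $(a)\Leftrightarrow(b)$. You iterate right $\SC$-approximations to produce $Y\rightarrowtail C^1\to\cdots\to C^n\twoheadrightarrow X$ with $C^i\in\SC$ and then invoke the cluster-tilting vanishing $\Ext^j(C^i,Z)=0$ for $1\le j\le n-1$ to conclude $\Ext^{j+n}(X,Z)\cong\Ext^j(Y,Z)$. But the cluster-tilting axiom leaves $\Ext^n(C^i,Z)$ uncontrolled, and the degree $n$ unavoidably occurs among the $n$ consecutive degrees needed to chain the one-step shifts; hence the isomorphism breaks. Already for $n=2$: from the splice one gets an injection $\Ext^1(Y,Z)\hookrightarrow\Ext^2(K^1,Z)$, but $\Ext^2(C^2,Z)$ is not known to vanish, so no further shift or vanishing is available. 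The classical (and IJ-style) fix is to replace $\SC$-approximations by an $\SE$-projective resolution of $X$ in $\SA$ --- $\SE$-projectives lie in $\SC$ because $\SC={}^{\perp_n}\SC$ and are $\Ext$-acyclic in every positive degree, which is what the shift actually needs. The same caution applies to the iso $\Ext^i(X,K^j)\cong\Ext^{i+n-j}(X,N)$ you invoke in $(a)\Rightarrow(d)$: the $M^p$ are not injective, so this identification does not hold in general; the gluing of the $n$ spliced long exact sequences does go through, but only via partial injectivity and surjectivity information coming from $\Ext^m(X,M^p)=0$ for $m\notin n\Z$, not via a blanket isomorphism.
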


Recall that an object $A$ of $\SC$ is said to be in $\SI$ if the identity morphism $1_A$ is in $\SI(A, A)$.

\begin{definition}
Let $\SI$ be an ideal of $\SC$. We say that $\SI$ is left closed under $n$-extensions by objects of $\SI$, if for every morphism of $\SX$-admissible $n$-exact sequences in $\SC$ such as
\begin{equation*}
\begin{tikzcd}
X^0 \dar{f^0} \rar[tail] & X^1 \rar \dar{f^1} & X^2 \rar\dar{f^2} & \cdots \rar & X^n \rar[two heads] \dar{f^n} &  X^{n+1} \dar[equals]\\
Y^0 \rar[tail] & Y^1 \rar & Y^2 \rar & \cdots \rar & Y^n \rar[two heads]  & X^{n+1}
\end{tikzcd}
\end{equation*}
$X^1$ is an object of $\SI$ if $f^0 \in \SI$ and $X^{n+1}$ is an object of $\SI$.
\end{definition}

Finally let us recall the definition of an $\SI$-cover.

\begin{definition}
Let $\SI$ be an ideal of $\SC$ and $A$ be an object of $\SC$. An $\SI$-precover $i: C \lrt A$ of $A$ is called an $\SI$-cover if every endomorphism $f: C\lrt C$ with the property that $if=i$ is necessarily an automorphism.
\end{definition}

Now we have the necessary ingredients to state and prove the main result of this section. For a version in $(n+2)$-angulated categories see Lemma 3.1 of \cite{Jor}.

\begin{theorem}(Wakamatsu's Lemma)
Let $(\SC, \SX)$ be an $n$-cluster tilting subcategory of an exact category $(\SA, \SE)$ with enough $\SX$-injective objects. Let $\CI$ be an ideal of $\SC$ which is left closed under $n$-extensions by objects in $\SI$.  Let $A$ be an object of $\SC$ and $i: I \lrt A$ be the $\SI$-cover of $A$. Then for every $X\in\SI$, there exists the exact sequence
\[0\lrt \Ext^{n}(X, {K_n}) \lrt \Ext^{n}(X, {K_{n-1}}) \lrt\cdots \lrt \Ext^{n}(X, {K_1}) \lrt 0,\]
of abelian groups, where $K_n\lrt K_{n-1} \lrt \cdots \lrt K_1$  is an $n$-kernel of $i$.
\end{theorem}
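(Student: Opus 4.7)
The plan is to realize the desired sequence as a truncation of the long $\Ext$-sequence induced, via Proposition~\ref{Definition-Proposition 2.15}(d), by the admissible $n$-exact sequence
\[\eta:\ K_n \rightarrowtail K_{n-1} \lrt \cdots \lrt K_1 \lrt I \twoheadrightarrow A\]
obtained by splicing the given $n$-kernel onto $i$ itself; the enough-$\SX$-injectives assumption is what secures the closure $\Om^{-n}(\SC) \subseteq \SC$ needed for Proposition~\ref{Definition-Proposition 2.15} to apply. The relevant slice of the resulting long exact sequence reads
\[\cdots \lrt \SC(X,I) \st{i_*}{\lrt} \SC(X,A) \lrt \Ext^n(X,K_n) \lrt \cdots \lrt \Ext^n(X,K_1) \st{j_*}{\lrt} \Ext^n(X,I) \lrt \cdots,\]
so the claimed exactness reduces to two boundary facts: (A) $i_*$ is surjective (forcing the connecting map $\SC(X,A)\lrt \Ext^n(X,K_n)$ to vanish, hence injectivity at $\Ext^n(X,K_n)$), and (B) $j_*$ is zero (giving surjectivity at $\Ext^n(X,K_1)$); interior exactness is automatic.

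Fact (A) follows immediately from the precover property: since $1_X \in \SI$, every $\va \in \SC(X,A)$ satisfies $\va = \va\cdot 1_X \in \SI(X,A)$ and therefore factors through the $\SI$-precover $i$.

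Fact (B) is the substantive Wakamatsu step. Given $\xi \in \Ext^n(X,K_1)$, I would form its $n$-pushout $\xi'$ along the $n$-kernel map $K_1 \lrt I$, producing
\[\xi':\ I \rightarrowtail Z^1 \lrt \cdots \lrt Z^n \twoheadrightarrow X,\]
and then the further $n$-pushout $\xi''$ of $\xi'$ along $i$, with bottom row $A \rightarrowtail W^1 \lrt \cdots \lrt W^n \twoheadrightarrow X$. By functoriality of pushforward, $[\xi''] = (i \circ (K_1 \lrt I))_*[\xi]$; but $K_1 \lrt I$ and $i$ are consecutive morphisms in $\eta$, so their composite vanishes and $[\xi''] = 0$. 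Hence $\xi''$ splits, yielding a retraction $r: W^1 \lrt A$ of its inflation $A \rightarrowtail W^1$. Setting $\va := r \circ (Z^1 \lrt W^1): Z^1 \lrt A$, commutativity of the pushout square gives $\va \circ j = i$, where $j: I \rightarrowtail Z^1$ is the inflation of $\xi'$. Now I would invoke the left-closed hypothesis on the morphism $\xi' \lrt \xi''$ of admissible $n$-exact sequences: its leftmost vertical map is $i \in \SI$, its rightmost object is $X \in \SI$, and the rightmost column is the identity on $X$, so the hypothesis forces $Z^1 \in \SI$. Consequently $\va = \va\cdot 1_{Z^1} \in \SI$, and the precover property factors $\va = i\beta$ for some $\beta: Z^1 \lrt I$. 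Combining with $\va \circ j = i$ yields $i\beta j = i$, and the cover (uniqueness) property of $i$ then forces $\beta j \in \End(I)$ to be invertible. The composite $(\beta j)^{-1}\beta: Z^1 \lrt I$ is therefore a retraction of $j$, splitting $\xi'$, and so $j_*[\xi] = [\xi'] = 0$ as required.

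The principal obstacle is the foundational setup: one must verify that the given $n$-kernel genuinely assembles with $i$ into an $\SX$-admissible $n$-exact sequence $\eta$ so that Proposition~\ref{Definition-Proposition 2.15} is in force; this is precisely where the enough-$\SX$-injectives assumption and the implicit admissibility of $i$ as a deflation are used. A secondary routine point, used twice above, is the standard splitting criterion: an admissible $n$-exact sequence represents the zero class in $\Ext^n$ if and only if its inflation admits a retraction, which follows from the null-homotopy description of split $n$-exact sequences.
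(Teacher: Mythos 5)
Your proof is correct and takes essentially the same route as the paper: both reduce to the surjectivity of $i_*$ (immediate from the precover property) together with the Wakamatsu step of showing that the top row of a pushout diagram splits when the bottom row does, deducing this by placing $Z^1$ in $\SI$ via the left-closed hypothesis and then invoking the cover property of $i$. The only cosmetic differences are that you show $j_*=0$ directly rather than the equivalent injectivity of $\Ext^n(X,i)$, and you construct the key morphism $\va\colon Z^1\to A$ explicitly from a retraction of the bottom inflation instead of appealing to the null-homotopy lemma of Fedele that the paper cites.
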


\begin{proof}
Consider $n$-exact sequence
\[K_n \rightarrowtail K_{n-1} \lrt \cdots \lrt K_1 \lrt I \twoheadrightarrow A\]
in $\SX$. Since $\SC$ has enough $\SX$-injective objects, it is closed under $n$-cosyzygies and hence by equivalent conditions of Proposition \ref{Definition-Proposition 2.15}, we have the following long exact sequence
{\footnotesize{
\[\begin{tikzcd}
&&&\cdots\rar&\SC(X, I) \rar{i^*} &\SC(X, A)\rar&{} \\  {}\rar&\Ext^n(X, K_n)\rar &\Ext^n(X, K_{n-1}) \rar &\cdots \rar &\Ext^n(X, I)\rar{\widehat{i}} & \Ext^n(X, A)\rar & \cdots \end{tikzcd}\]}}
\noindent of abelian groups. To prove the theorem, it is enough to show that $i^*$  is surjective and $\widehat{i}$ is injective. This we do. Since $X \in \Ob(\SI)$ and $i$ is an $\SI$-cover, it follows by definition that $i^*$ is surjective. So it remains to prove that $\widehat{i}$ is injective. This follows from \cite[Lemma 5.1]{Fe}. We reproduce the proof here. Let
\[\eta: I \rightarrowtail X^1 \lrt X^2 \lrt \cdots \lrt X^n \twoheadrightarrow X\]
be an element of $\Ext^n(X, I)$ that maps to zero in $\Ext^n(X, A)$, that is, the $n$-pushout of $\eta$ along $i$, say $\eta'$, is a contractible $n$-exact sequence. We show that $\eta$ itself should be a contractible $n$-exact sequence. To see this consider the following $n$-pushout diagram
\begin{equation*}
\begin{tikzcd}
\eta\dar{\widehat{i}}: &I \dar{i} \rar[tail]{d^0_X} & X^1 \rar{d^1_X} \dar{f^1} \dlar[dotted] & X^2 \rar{d^2_X}\dar{f^2} & \cdots \rar & X^n \rar[two heads]{d^n_X} \dar{f^n} &  X \dar[equals] \dlar[dotted]\\
\eta': &A \rar[tail]{d^0_Y} & Y^1 \rar{d^1_Y} & Y^2 \rar{d^2_Y} & \cdots \rar & Y^n \rar[two heads]{d^n_Y}  & X
\end{tikzcd}
\end{equation*}
To show that $\eta$ is contractible, by \cite[Proposition 2.6]{Ja}, it is enough to show that $d^0_X$ is a split monomorphism. Since $\eta'$ is contractible, there exists morphism $s^{n+1}: X \rt Y^n$ such that $d^n_Y s^{n+1}=1_X$. This, in view of Lemma 3.6 of \cite{Fe} implies that $\widehat{i}: \eta \lrt \eta'$ should be null-homotopic.   So, in particular, there exists a morphism $s^1: X^1 \lrt A$ such that $s^1d^0_X = i$.

Now since $\SI$ is left closed under $n$-extensions by objects of $\SI$ and $i\in \SI$ and $X$ is an object in $\SI$, $X^1 \in \Ob(\SI)$. So $s^1: X^1 \lrt A$ is a morphism in $\SI$ and hence factors through $i$. Let $\alpha: X^1 \lrt I$ be such that $i\alpha= s^1$.  By composing with $d^0_X$  we get $i\alpha d^0_X=s^0d^0_X=i$. Since $i$ is an $\SI$-cover, $\alpha d^0_X: I\lrt I$ is an isomorphism. So $d^0_X$ is a split monomorphism. Hence $\eta$ is contractible, as it was desired.
\end{proof}

 \end{document}